\documentclass[preprint,12pt]{elsarticle}

\usepackage{amsmath,amssymb,color}
\usepackage{graphicx,epsfig,framed}
\usepackage{mathptmx,times} % if new font selection scheme installed
\usepackage{epstopdf,textcase} % flushend.sty not reachable  
\usepackage[latin1]{inputenc}  
\usepackage{soul,multirow,pifont} % colortbl.sty not reachable
\usepackage{color,alltt}          % pbox.sty not reachable
\usepackage[hidelinks]{hyperref}
\usepackage{enumerate,siunitx}    % breakurl.sty not reachable
\usepackage{bigints}
\usepackage{tikz}
\usetikzlibrary{shapes.geometric}
\usepackage{circuitikz}

\usepackage{natbib}
\newtheorem{theorem}{Theorem}[section]

\newtheorem{definition}[theorem]{Definition}

\newtheorem{remark}[theorem]{Remark}
\newcommand{\rfb}[1]{\mbox{\rm
   (\ref{#1})}\ifx\undefined\stillediting\else:\fbox{$#1$}\fi}

\newfont{\roma}{cmr10 scaled 1200}

\newcommand{\rline}  {{\mathbb R}}

\newcommand{\BBB}  {{\bf B}}
\newcommand{\CCC}  {{\bf C}}
\newcommand{\EEE}  {{\bf E}}
\newcommand{\JJJ}  {{\bf J}}

\newcommand{\HHH} {{\mathbf H}}

\newcommand{\PPP} {{\mathbf P}}
\newcommand{\SSS} {{\mathbf S}}
\newcommand{\DDD} {{\mathbf D}}

\newcommand{\dd}  {{\rm d}\hbox{\hskip 0.5pt}}
\renewcommand{\leq} {\leqslant}
\renewcommand{\geq} {\geqslant}

\newcommand{\Dscr} {{\cal D}}
\newcommand{\Escr} {{\cal E}}

\newcommand{\Hscr} {{\cal H}}
\newcommand{\Kscr} {{\cal K}}
\newcommand{\Lscr} {{\cal L}}
\newcommand{\Mscr} {{\cal M}}
\newcommand{\Nscr} {{\cal N}}
\newcommand{\Sscr} {{\cal S}}
\newcommand{\Uscr} {{\cal U}}
\newcommand{\Yscr} {{\cal Y}}
\newcommand{\mm}    {{\hbox{\hskip 0.5pt}}}
\newcommand{\m}     {{\hbox{\hskip 1pt}}}
\newcommand{\n}     {{\hbox{\hskip -5pt}}}
\newcommand{\nm}    {{\hbox{\hskip -3pt}}}

\newcommand{\bluff} {{\hbox{\raise 15pt \hbox{\mm}}}}
\newcommand{\sbluff}{{\hbox{\raise 10pt \hbox{\mm}}}}
\newcommand{\Om}    {{\Omega}}
\renewcommand{\div} {{\rm div\,}}
\newcommand{\e}     {{\varepsilon}}
\renewcommand{\L}   {{\Lambda}}
\renewcommand{\l}   {{\lambda}}

\newcommand{\FORALL}{{\hbox{$\hskip 10mm \forall \;$}}}
\newcommand{\rarrow}{\mathop{\rightarrow}}

\newcommand{\AB}     {{A\& B}}
\newcommand{\CD}     {{C\& D}}

\makeatletter
\renewcommand{\p@enumii}{}
\makeatother

\newcommand{\ULax}   {\boldsymbol{{\mathfrak{T}}}}
\newcommand{\GothA}  {\boldsymbol{{\mathfrak{A}}}}

\newcommand{\rot}    {{\rm rot\m}}
\renewcommand{\div}  {{\rm div\m}}
\newcommand{\bbm}[1]{\left[\begin{matrix} #1 \end{matrix}\right]}
\newcommand{\sbm}[1]{\left[\begin{smallmatrix} #1
\end{smallmatrix}\right]}
\makeatletter
\renewcommand{\p@enumii}{}
\makeatother

\renewcommand{\ULax}   {\boldsymbol{{\mathfrak{T}}}}
\renewcommand{\GothA}  {\boldsymbol{{\mathfrak{A}}}}

\definecolor{forestgreen}{rgb}{0.13, 0.55, 0.13}
\journal{System and Control Letters}

\begin{document}

\begin{frontmatter}
\title{Projected incrementally scattering passive systems on closed
convex sets}
%\title{This is a specimen title\tnoteref{t1,t2}}
\tnotetext[t1]{This research has been funded by the Israel Science 
Foundation under grants no. 3621/21 and 2802/21. At the time of 
writing this article, all authors were affiliated with the School of 
Electrical Engineering, Tel Aviv University, Israel. E-mail 
addresses\m: $^1$shantanu.singh46@gmail.com, 
$^2$sebastien.fueyo@gmail.com, $^3$gweiss@tauex.tau.ac.il\m.}

\author{Shantanu Singh\corref{cor1}$^1$, S\'ebastien Fueyo$^2$, 
George Weiss$^3$}%}
%\ead{shantanu.singh46@gmail.com, sebastien@tauex.tau.ac.il, 
%gweiss@tauex.tau.ac.il}
%\author[2]{S\'ebastien Fueyo}
%\ead{sebastien@tauex.tau.ac.il}
%\author[3]{George Weiss}
%\ead{gweiss@tauex.tau.ac.il}
\address{Department of Electrical and Computer Engineering, Technical
   Univ. of Crete, Crete, Greece.$^1$ Univ. Grenoble Alpes, CNRS, 
   Inria, Grenoble INP, GIPSA-lab, Grenoble, France.$^2$\\ School of
   Electrical Engineering, Tel Aviv University, Ramat Aviv,  Tel Aviv,
   Israel.$^{3}$}
%\affiliation[2]{organization={Tel Aviv University},
%addressline={Ramat Aviv},
%postcode={6997801},
%city={Tel Aviv},
%country={Israel}}
%\affiliation[3]{organization={Tel Aviv University},
%addressline={Ramat Aviv},
%postcode={6997801},
%city={Tel Aviv},
%country={Israel}}

%\thanks{This research has been funded by the Israel Science 
%Foundation under grants no. 3621/21 and 2802/21. The authors are with
%the School of Electrical Eng., Tel Aviv University, Israel. E-mail 
%addresses\m: $^1$shantanu@tauex.tau.ac.il, 
%$^2$sebastien@tauex.tau.ac.il, $^3$gweiss@tauex.tau.ac.il\m.}    

%%%%%%%%%%**********%%%%%%%%%%**********%%%%%%%%%%**********%%%%%%%%%%

%\maketitle
%\thispagestyle{empty}
%\pagestyle{empty}

%%%%%%%%%%**********%%%%%%%%%%**********%%%%%%%%%%**********%%%%%%%%%%
\begin{abstract}
In this article we show that the projected dynamical system obtained
by restricting the state of an incrementally scattering passive system
to a closed and convex subset $K$ of the state space (a real Hilbert
space), is also an incrementally scattering passive system. First we
show that the projection of a maximal dissipative operator to the
tangent cones of $K$ is again maximal dissipative, hence, it
determines a contraction semigroup. Using this result, we prove our
earlier claim. Our results are based on the Crandall-Pazy theorem,
Rockafellar's theorem on sums of operators and Moreau's decomposition 
theorem. We give an application of our results to Maxwell's equations 
on a cylindrical domain, approximately describing a fault current 
limiter, restricting the average current through the cylinder (in the 
direction of its axis) so that its absolute value cannot exceed a 
given threshold.
\end{abstract}

\begin{keyword}
Semigroups, dissipative operators, Lax-Phillips semigroup, projected 
differential inclusion.
\end{keyword}

\end{frontmatter}

%%%%%%%%%%**********%%%%%%%%%%**********%%%%%%%%%%**********%%%%%%%%%%
\section{Introduction} \label{sec1}

This paper is about infinite dimensional nonlinear systems that are
incrementally scattering passive. The intuitive meaning of passivity
is that the system does not have an internal source of energy. Passive
systems with a finite dimensional state space are extensively explored
in the textbook \cite[Chapter 4]{van der} and the article
\cite{Willems}. The notion of passivity has paved the way for the
valuable concept of port-Hamiltonian systems, introduced in
\cite{Maschke_van}. For passive infinite dimensional systems, we refer
to \cite{jacob2012linear}, \cite{Ortega_Maschke, Staf_2002},
\cite[Chapter 11]{Staf_book} and \cite{TuWe_survey}, and for
incrementally scattering passive infinite dimensional nonlinear
systems, we refer to \cite{ShWeTu:art149, ShWe:art157, ShWe:art158}.

Following \cite{ShWe:art158}, any incrementally scattering passive
system $\Sigma$ with the state space $X$, the input space $U$ and the 
output space $Y$ (all real Hilbert spaces) can be described locally 
in time as follows: For $[x(t)\ u(t)]^\top\in\Dscr([\AB])$ and $u$ 
in the Sobolev space $\Hscr^1((0,\infty);U)$, \vspace{-1mm}
\begin{equation} \label{Bruno}
  \left\{\begin{array}{ll}\frac{\dd^+}{\dd t} x(t) \m\in \m [\AB]
  \bbm{x(t)\\u(t)}  \FORALL t\geq 0,\vspace{2mm}\\
  y(t) \m=\m [\CD]\bbm{x(t)\\u(t)} \m \FORALL t\geq 0\m. 
  \end{array}\right. \vspace{-1mm}
\end{equation} 
Here $x(t)\in X$ is the state of the system at time $t$, $\frac{\dd^+}
{\dd t}x(t)$ is its right derivative, $u(t)\in U$ is the input signal
at $t$ and $y(t)\in Y$ is the output signal at $t$. The operator
$[\AB]: \Dscr([\AB])\rarrow X$ is nonlinear (possibly set-valued) and
the operator $[\CD] : \Dscr([\AB])\rarrow Y$ is nonlinear and single
valued. The domain $\Dscr([\AB])$ is dense in $X\times U$. The
incremental passivity of this system implies that the state
trajectories $x_1,\m x_2$ and the output functions $y_1,\m y_2$
corresponding to the input functions $u_1,\m u_2$, respectively, that
are smooth enough so that they are solutions of \rfb{Bruno}, satisfy
the following {\it power balance inequality}\m: For all $t\geq 0$
\begin{equation*} 
   \frac{\dd^+}{\dd t}\|x_1(t)-x_2(t)\|^2 \m\leq\m \|u_1(t)-u_2(t)
   \|^2 - \|y_1(t)-y_2(t)\|^2 \m,
\end{equation*}
where $\frac{\dd^+}{\dd t}$ (again) denotes the right derivative at
$t$.

{\bf Our aim} in this article is to investigate the well-posedness and
incremental scattering passivity of the modified system $\Sigma^K$
that is obtained from $\Sigma$ when the state $x(t)$ is constrained to
a closed and convex subset $K\subset X$ with a nonempty interior. When 
the state of \m $\Sigma^K$ is in the interior of $K$, then the 
equations of \m $\Sigma^K$ are the same as those of $\Sigma$. When the 
state $x(t)$ of $\Sigma^K$ reaches the boundary of $K$, then its
velocity $\frac{\dd^+}{\dd t}x(t)$ is constrained to the tangent cone 
to $K$ in that point. Thus, the constrained system $\Sigma^K$ is 
described by the following projected differential inclusion and output
equation\m:
\begin{equation} \label{Rashford}
  \left\{\begin{array}{ll}\frac{\dd^+}{\dd t}x(t) \m\in\m \Pi_K
  \left(x(t),[\AB]\sbm{x(t)\\u(t)}\right)  \hspace{-0.5cm}\FORALL 
  t\geq 0,\vspace{2mm}\\ y(t) \m=\m [\CD]\bbm{x(t)\\u(t)}\hspace{1cm} 
  \FORALL t\geq 0. \end{array}\right. 
\end{equation} 
Here $\Pi_K(p,S)$, for any $p\in K$ and any set $S\subset X$, is the 
projection of $S$ on the tangent cone $T_K(p)$, see Section \ref{sec2}
for more details. The operator $\Pi_K$ projects the set $[\AB]
\sbm{x(t)\\u(t)}$ of possible velocity vectors to the tangent cone 
$T_K(x(t))$ of $K$, thereby forcing $x(t)$ to remain in $K$.

Our work is motivated by physics and engineering applications where
the system is allowed to operate within a convex set due to physical
constraints. One example is a vibrating string whose displacement is
constrained at one interior point (see \cite{singh2023CDC}). Another
example would arise if we consider the model of a flexible wind 
turbine tower coupled with a tuned mass damper (TMD) as in 
\cite[Section 3]{ShWeTu:art149}, and impose a natural constraint on 
the displacement of the TMD within the nacelle of the wind turbine 
tower.

The class of systems represented by \rfb{Rashford} or special cases of
\rfb{Rashford} are often called {\it projected dynamical systems}. 
Relevant references on projected dynamical systems with a finite
dimensional state space are \cite{brog,nagurney1995projected,
brogliato2006equivalence,heemels2023existence,art122,xia2000stability,
hauswirth2018time,gao2003exponential}. In \cite{heemels2023existence,
art122} and \cite{Lorenzetti2022}, the system under consideration is a
feedback system and the constraint is imposed on the state of the 
controller (which, of course, is a part of the overall state). In 
\cite{cojo1}, nonlinear systems on real Hilbert spaces are considered 
with the nonlinear operator $[\AB]$ in \rfb{Bruno} and \rfb{Rashford} 
replaced with a Lipschitz continuous function of the state (hence with
an equality instead of an inclusion in \rfb{Bruno} and 
\rfb{Rashford}). Inputs and outputs are not considered in 
\cite{cojo1}. The class of nonlinear systems as in \rfb{Bruno} are an
extension of the class of systems in \cite{cojo1}, in the sense that 
we allow the states of the systems to evolve according to a 
differential inclusion, moreover, we consider systems with inputs and
outputs.

Our approach is to first represent the unconstrained incrementally
scattering passive system via its nonlinear Lax-Phillips semigroup
(for background on this semigroup see for instance \cite{Staf_2002,
Staf_book, WeSt13} for linear systems and \cite{ShWeTu:art149,
ShWe:art157, ShWe:art158} for nonlinear systems). We denote by
$\GothA$ the maximal dissipative (possibly set-valued) operator that
determines the Lax-Phillips semigroup (see Section \ref{sec2} for
details). Using the projection of $\GothA$ to certain tangent cones,
we show that the new (possibly set-valued) operator representing the
projected dynamical system determines a contractive Lax-Phillips
semigroup of nonlinear operators.

This approach requires inestigating the following Cauchy problem on 
a real Hilbert space $Z$, for some maximal dissipative $\GothA$ and 
a closed and convex $\Kscr\subset \overline{\Dscr(\GothA)}$, with int
\m $\Kscr\not=\emptyset$\m: Find a continuous and right differentiable
$z:[0,\infty)\rarrow Z$ such that $z(t)\in \Dscr(\GothA)\cap\Kscr$
for all $t\geq 0$ and
\begin{equation} \label{Onana}
   \frac{\dd^+}{\dd t}z(t) \m\in\m \Pi_\Kscr(z(t),\GothA z(t))\m,
   \quad z(0) \m=\m z_0\in\Dscr(\GothA)\cap \Kscr.
\end{equation}
We prove in Section \ref{sec3} that the above Cauchy problem has 
well-posed solutions (as defined in Definition \ref{Solution_def}) and
these determines a contraction semigroup $\ULax^\Kscr$ on $Z$. This
result is the basis for proving our main result concerning the
constrained systems of type \rfb{Rashford}.

The outline of the paper is as follows. We recall some facts about
nonlinear (possibly set-valued) dissipative operators and contraction
semigroup of nonlinear operator in Section \ref{sec2}. In Section
\ref{sec3} we prove that the Cauchy problem \rfb{Onana} has a unique
solution (given by a contraction semigroup). In Section \ref{sec4} we
investigate the particular case of the differential inclusion
\rfb{Onana} when $\Kscr$ is generated by a finite number of scalar
constraints (inequalities satisfied by continuous linear functionals
of the state). We introduce incrementally scattering passive nonlinear
systems in Section \ref{sec5} and recall some relevant results about
such systems. In Section \ref{sec6} we show that the projected
dynamical nonlinear systems (described by \rfb{Rashford}) associated
with incrementally scattering passive systems (described by
\rfb{Bruno}) are incrementally scattering passive. An application of
our results to a string equation is given in Section \ref{sec7} and to
Maxwell's equations in Section \ref{sec8}. The example from Section
\ref{sec8} is a model of a fault current limiter. We consider
Maxwell's equations on a bounded cylinder. Using our result from
Section \ref{sec6}, we show that the projected dynamical system
obtained from Maxwell's equations, as a result of forcing the average
current through the cylinder to be less than a given highest allowed
value, is an incrementally scattering passive system.

%%%%%%%%%%**********%%%%%%%%%%**********%%%%%%%%%%**********%%%%%%%%%%
\section{Contraction semigroups of nonlinear operators} \label{sec2}

In this section we recall some preliminaries about monotone operators
on real Hilbert spaces and the nonlinear operator semigroups defined
using such operators. The following results are based on
\cite{Barbu,Brezis,Brezis2,Browder68,CranPazy,Kato_accr,Minty,Rocka}.
For more recent results see \cite{Bauschke,Show}.

Let $Z$ be a real Hilbert space. A nonlinear (possibly set-valued)
operator $\Nscr$ defined on $\Dscr(\Nscr) \subset Z$, whose values are
nonempty subsets of $Z$, is {\em monotone} if for every $w_1,w_2\in
\Dscr(\Nscr)$, the inequality
\begin{equation*} \label{monotone_op}
   \m\langle g_1-g_2, w_1-w_2\rangle \m\geq\m 0 
   \quad\forall g_1\in\Nscr(w_1),\m g_2\in\Nscr(w_2)
\end{equation*}
holds. Such an operator is called {\it maximal monotone} if $\Nscr$
has no proper monotone extension (mapping a subset of $Z$ to subsets
of $Z$). If $\Nscr$ is (maximally) monotone then $-\Nscr$ is called
(maximally) {\it dissipative}. If $\Nscr$ is maximal dissipative, then
its graph is closed, and for every $w\in\Dscr(\Nscr)$, the set 
$\Nscr(w)$ is closed and convex. The closure of $\Dscr(\Nscr)$ is also
a convex set.  

Given two maximal dissipative operators $\Nscr_1$ and $\Nscr_2$, their
sum $\Nscr_1+\Nscr_2$ is not necessarily maximal dissipative as it
might happen that $\Dscr(\Nscr_1)\cap \Dscr(\Nscr_2)$ is too small or
even empty. In this regard, an important theorem to determine the
maximality of the sum of two maximal dissipative operators was given
by R.T. Rockafellar, see \cite[Theorem 1]{Rocka}. Here is a simplified 
statement of this result:

{\color{blue}
\begin{theorem}[Rockafellar] \label{Rockafellar} 
If $\Nscr_1$ and $\Nscr_2$ are maximal dissipative operators on
$\Dscr(\Nscr_1)\subset Z$ and $\Dscr(\Nscr_2)\subset Z$ respectively,
then the operator $\Nscr_1+\Nscr_2$ (defined on $\Dscr(\Nscr_1)\cap
\Dscr(\Nscr_2)$) is maximal dissipative if
\begin{equation*} \label{Rocka_eq}
   \Dscr(\Nscr_1)\cap{\rm int}(\Dscr(\Nscr_2))\m\neq\m \emptyset,
\end{equation*}
where ${\rm int}(\Dscr(\Nscr_2))$ means the interior of 
$\Dscr(\Nscr_2)$. 
\end{theorem}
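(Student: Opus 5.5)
Throughout I work with the monotone operators $A_i=-\Nscr_i$, since maximal dissipativity of $\Nscr_1+\Nscr_2$ is the same as maximal monotonicity of $A_1+A_2$. The sum is clearly monotone. By Minty's theorem, maximality is equivalent to the range condition ${\rm Ran}(I+A_1+A_2)=Z$. After a translation of the domain variable and of $A_1,A_2$ by constants, I may assume $0\in\Dscr(A_1)$, $0\in A_1(0)$, and $\overline{B(0,r)}\subset{\rm int}\,\Dscr(A_2)$ for some $r>0$.

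\textbf{Step 1: regularization.} Fix $f\in Z$. For $\l>0$ let $A_{2,\l}=\l^{-1}(I-J_\l)$ be the Yosida approximation of $A_2$, where $J_\l=(I+\l A_2)^{-1}$. It is single-valued, monotone and $\l^{-1}$-Lipschitz. A monotone Lipschitz perturbation of a maximal monotone operator is maximal monotone; this follows from a Banach fixed point argument on the resolvent of $A_1$ for small steps, iterated. Hence there is $w_\l\in\Dscr(A_1)$ with
\[ f\in w_\l+A_1w_\l+A_{2,\l}w_\l . \]

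\textbf{Step 2: uniform bounds (main obstacle).} Testing the inclusion against $w_\l$ and using $0\in A_1(0)$ gives $\langle A_1 w_\l,w_\l\rangle\ge0$. Together with $\langle A_{2,\l}w_\l,w_\l\rangle\ge \langle A_{2,\l}0,w_\l\rangle$, this bounds $\|w_\l\|$, provided $\|A_{2,\l}0\|\le\|A_2^0(0)\|$ is bounded, which holds since $0\in\Dscr(A_2)$.

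The hard part is bounding $\|A_{2,\l}w_\l\|$, and here the interior assumption enters. A maximal monotone operator is locally bounded at interior points of its domain. I would prove this via Baire: write the ball as the union of the closed sets $\{v:\ \|A_2^0v\|\le n\}$, then use convexity. This gives $\|g_v\|\le M$ for all $v\in \overline{B(0,r)}$ and some $g_v\in A_2v$. Monotonicity at the pair $(J_\l w_\l, A_{2,\l}w_\l)$ then yields
\[ \langle A_{2,\l}w_\l, J_\l w_\l - v\rangle \ge \langle g_v, J_\l w_\l - v\rangle . \]
Taking the supremum over $\|v\|\le r$ gives
\[ r\|A_{2,\l}w_\l\|\le \langle A_{2,\l}w_\l,J_\l w_\l\rangle + M(\|J_\l w_\l\|+r). \]
Since $J_\l w_\l=w_\l-\l A_{2,\l}w_\l$, the right side is at most $\langle A_{2,\l}w_\l,w_\l\rangle+C$. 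From the equation, $\langle A_{2,\l}w_\l,w_\l\rangle=\langle f-w_\l-a_\l,w_\l\rangle\le \langle f-w_\l,w_\l\rangle$, where $a_\l\in A_1w_\l$ and I use $\langle a_\l,w_\l\rangle\ge0$. This is bounded, so $\|A_{2,\l}w_\l\|$ is bounded, and then so is $\|a_\l\|$.

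\textbf{Step 3: passage to the limit.} Subtract the equations for $\l$ and $\mu$ and test with $w_\l-w_\mu$. Monotonicity of $A_1$ and the standard Yosida estimate
\[ \langle A_{2,\l}w_\l-A_{2,\mu}w_\mu,w_\l-w_\mu\rangle\ge -C(\l+\mu) \]
(valid thanks to the bounds of Step 2) give $\|w_\l-w_\mu\|^2\le C(\l+\mu)$. Hence $w_\l\to w$ strongly, and $J_\l w_\l\to w$ as well. Along a subsequence, $A_{2,\l}w_\l\rightharpoonup b$ and $a_\l\rightharpoonup a$ weakly. Maximal monotone graphs are strongly-weakly closed, so $b\in A_2w$ and $a\in A_1w$, with $f=w+a+b$. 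Therefore ${\rm Ran}(I+A_1+A_2)=Z$, and $\Nscr_1+\Nscr_2$ is maximal dissipative.
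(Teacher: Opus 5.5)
Your proof is correct. It is a different route from the paper's, which gives no proof at all: the paper imports the statement from Rockafellar's Theorem~1, which is proved there for maximal monotone operators from a reflexive Banach space to its dual by Banach-space methods.

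You instead give a self-contained Hilbert-space argument in the style of Brezis. It uses Minty's range criterion, Yosida regularization of the operator $A_2$ whose domain has interior, and an a priori bound on $A_{2,\lambda}w_\lambda$ from local boundedness of $A_2$ at the point $0\in\Dscr(A_1)\cap{\rm int}\,\Dscr(A_2)$. It concludes with a Cauchy estimate and strong--weak closedness of maximal monotone graphs.

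The citation buys generality (reflexive Banach spaces) at no cost to the paper. Your route uses only tools native to the paper's Hilbert setting, and it has a useful reading for Theorem~\ref{th1}. There $A_2=N_\Kscr$, so $J_\lambda=P_\Kscr$ and $A_{2,\lambda}=\lambda^{-1}(I-P_\Kscr)$. Your argument therefore shows that $(I-\GothA_\Kscr)^{-1}f$ is the strong limit of the solutions of the penalized problems $w_\lambda=(I-\GothA+\lambda^{-1}(I-P_\Kscr))^{-1}f$.

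Step~2 needs two things tightened.

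First, the sets $\{v:\ \|A_2^0v\|\le n\}$ are closed, by strong--weak closedness of the graph, but they are not convex. Baire therefore only gives a bound $M$ on some ball $B(v_1,\delta)$ that need not be centred at $0$, and ``use convexity'' has to be replaced by a monotonicity transport argument:
\begin{enumerate}[(a)]
\item Pick $t>0$ with $-tv_1\in\Dscr(A_2)$ and some $g\in A_2(-tv_1)$.
\item For $[x,y]$ in the graph of $A_2$, test monotonicity against $(v,g_v)$ with $v\in B(v_1,\delta)$, and separately against $(-tv_1,g)$.
\item Combine the two inequalities, writing $x+tv_1=t(v_1-x)+(1+t)x$. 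This bounds $\|y\|$ uniformly for $\|x\|\le t\delta/(2(1+t))$.
\end{enumerate}
So you get the bound only on a possibly smaller ball around $0$. You must shrink $r$ accordingly, which is harmless.

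Second, to absorb the term $M\|J_\lambda w_\lambda\|$ you need $\|J_\lambda w_\lambda\|\le\|w_\lambda\|+\lambda\|A_2^0(0)\|$. This follows from nonexpansiveness of $J_\lambda$ and $\|J_\lambda 0\|=\lambda\|A_{2,\lambda}0\|$. With these two points made explicit, the estimates close exactly as you describe.
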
}
\smallskip

A {\em strongly continuous semigroup of nonlinear operators} 
$\ULax=\left(\ULax_t\right)_{t\geq 0}$ acting on a closed subset 
$\Kscr\subset Z$ is defined exactly as in the linear case, without 
requiring that the operators are linear. If $\ULax$ is such a
semigroup, then define the operator \vspace{-2mm} 
\begin{equation} \label{operator_minimal}
   \GothA^0 z \m=\m \lim_{t\rarrow 0,\m\m t>0} \frac{1}{t} \left[ 
   \ULax_t z - z\right] \m,
\end{equation} \vspace{-2mm}
\begin{equation} \label{D(operator_minimal)} 
   \Dscr(\GothA^0) \m=\m \left\{ z\in \Kscr\ |\ \mbox{the above limit 
   exists} \right\} \m.
\end{equation} 
Following \cite{CranPazy}, $\GothA^0$ is called the {\em (strong)
generator} of $\ULax$. Very little is known about semigroups of
nonlinear operators at this level of generality. However, there is a
rich body of knowledge about a subclass of such semigroups, those
that are contractive. The semigroup $\ULax$ is called {\em
contractive} if
$$ \|\ULax_t z_1 - \ULax_t z_2\| \m\leq\m \|z_1-z_2\| 
   \n\n\FORALL z_1,z_2\in \Kscr, \m\m t\geq 0.$$
For the basics about such operator semigroups we refer to
\cite{Brezis74, CranPazy, Kato_accr, Show}. The following theorem
follows from Theorems 1.3 and A2 as well as Lemma 3.2, and
Corollary 3.1 in Crandall and Pazy \cite{CranPazy}.  

{\color{blue}
\begin{theorem} \label{Putin}
Assume that $\ULax$ is a contractive semigroup of nonlinear operators
on a closed and convex subset $\Kscr$ of the Hilbert space $Z$. Then
the operator $\GothA^0$ from \rfb{operator_minimal} and
\rfb{D(operator_minimal)} is densely defined in $\Kscr$ and is
dissipative. The operator $\GothA^0$ has a unique maximal dissipative
extension $\GothA$ (which may be set-valued) with the same domain
$\Dscr(\GothA)=\Dscr(\GothA^0)$. If $z_0\in \Dscr(\GothA)$, then
$\GothA^0z_0$ is the unique element of smallest norm in the closed and
convex set $\GothA z_0$.

Let $z_0\in\Dscr(\GothA)$. The function $z:[0,\infty)\rarrow \Kscr$
defined by $z(t)=\ULax_tz_0$ is Lipschitz continuous and right
differentiable at every $t\geq 0$. Moreover, for every $t\geq 0$, it
holds that $z(t)\in \Dscr(\GothA)$,\vspace{-2mm}
\begin{equation} \label{d^+z/dt}
   \frac{\dd^+}{\dd t}z(t) \m=\m \GothA^0 z(t),\vspace{-2mm}
\end{equation}
and $\GothA^0 z$ is right continuous at $t$. Moreover $z$ is the
unique Lipschitz continuous and right differentiable
$\Dscr(\GothA)\cap\Kscr$-valued function that is a solution of
\rfb{d^+z/dt} and $z(0)=z_0$. The function $z$ is also the unique
Lipschitz continuous and right differentiable function that satisfies
$z(0)=z_0$ and the differential inclusion
$$\frac{\dd^+}{\dd t}z(t)\in \GothA z(t)\m.$$
\end{theorem}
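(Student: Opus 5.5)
The proof assembles results from Crandall and Pazy \cite{CranPazy} in a Hilbert space setting. The plan has three stages: dissipativity and density of $\GothA^0$, construction of the maximal extension, and the regularity and uniqueness of trajectories.

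\emph{Stage 1: dissipativity.} For $z_1,z_2\in\Dscr(\GothA^0)$, contractivity gives
\[\langle \ULax_t z_1-\ULax_t z_2 - (z_1-z_2), z_1-z_2\rangle \leq \|\ULax_tz_1-\ULax_tz_2\|\|z_1-z_2\|-\|z_1-z_2\|^2\leq 0.\]
Dividing by $t$ and letting $t\to 0^+$ yields $\langle \GothA^0z_1-\GothA^0z_2,z_1-z_2\rangle\leq 0$.

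\emph{Stage 1: density.} Let $z\in\Kscr$ and set $z_h=\frac1h\int_0^h\ULax_s z\,\dd s$; this lies in $\Kscr$ because $\Kscr$ is closed and convex. In the linear case $z_h\in\Dscr(\GothA^0)$, but that fails here. Instead I would invoke Crandall--Pazy's result (their Theorem 1.3 / Lemma 3.2) that in Hilbert space the set of $z$ with $\limsup_{t\to0}\|\ULax_tz-z\|/t<\infty$ is dense and coincides with $\Dscr(\GothA^0)$. The reflexivity and uniform convexity of $Z$ are what let the difference quotients converge weakly and then strongly.

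\emph{Stage 2: maximal extension.} I would define
\[\GothA z=\{w: \langle w-\GothA^0 v, z-v\rangle\leq 0\ \ \forall v\in \Dscr(\GothA^0)\},\qquad z\in\Dscr(\GothA^0),\]
and show that this is the unique maximal dissipative extension with the same domain. The key tool is Crandall--Pazy's Theorem A2: a semigroup on a closed convex set is generated, via the exponential formula $\ULax_tz=\lim (I-\frac tn\GothA)^{-n}z$, by a maximal dissipative operator, and $\GothA^0$ is its minimal section. The minimal-norm property follows because $\|\GothA^0 z_0\| = \lim\|\ULax_tz_0-z_0\|/t \leq |\GothA z_0|$. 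This bound comes from the standard estimate $\|\ULax_tz_0-z_0\|\leq t\,\inf_{w\in\GothA z_0}\|w\|$, obtained from the exponential formula and the resolvent estimate. Uniqueness of the minimal element follows from strict convexity of the norm on the closed convex set $\GothA z_0$.

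\emph{Stage 3: trajectories.} Lipschitz continuity follows from
\[\|\ULax_{t+h}z_0-\ULax_tz_0\|\leq\|\ULax_hz_0-z_0\|\leq Lh.\]
The same estimate shows $\ULax_tz_0$ has bounded difference quotients, so it lies in $\Dscr(\GothA^0)$. The right derivative exists and equals $\GothA^0z(t)$ by the semigroup property: the right difference quotient at $t$ is the difference quotient at $0$ for $z(t)$. Right continuity of $t\mapsto\GothA^0z(t)$ (Corollary 3.1) uses two facts: $\|\GothA^0z(t)\|$ is nonincreasing, and $\GothA$ has a closed graph. Together these give weak convergence along $t_n\downarrow t$, and the norm bound upgrades it to strong convergence. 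For uniqueness, suppose $\tilde z$ is another Lipschitz, right differentiable solution of the inclusion. Dissipativity gives $\frac{\dd^+}{\dd t}\|z-\tilde z\|^2\leq 0$, and a Lipschitz function with nonpositive right derivative is nonincreasing, so $z=\tilde z$. This covers both the equation \rfb{d^+z/dt} and the inclusion.

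The main obstacle is Stage 2 together with the density claim: proving that the dissipative $\GothA^0$ extends to a maximal dissipative operator \emph{with the same domain} and actually generates $\ULax$. I would not reprove this; I would cite Crandall--Pazy's Theorem A2 and Lemma 3.2 directly.
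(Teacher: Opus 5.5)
Your proposal follows essentially the same route as the paper. The deep facts are taken from Crandall--Pazy exactly as the paper does: density of $\Dscr(\GothA^0)$ in $\Kscr$, the identification of $\Dscr(\GothA^0)$ with the points having bounded difference quotients, and the existence of a maximal dissipative extension generating $\ULax$. The steps the paper merely cites, namely dissipativity, the minimal-norm property via $\|\ULax_t z_0-z_0\|\leq t\inf_{w\in\GothA z_0}\|w\|$, Lipschitz continuity, right continuity of $\GothA^0 z(\cdot)$, and uniqueness via $\frac{\dd^+}{\dd t}\|z-\tilde z\|^2\leq 0$, you argue directly and correctly.

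Two minor points. In the right-continuity step you need the graph of $\GothA$ to be closed in the strong$\times$weak topology (which holds for maximal dissipative operators), not merely closed. Also, the claim that your explicit formula for $\GothA z$ defines a dissipative operator, which is what yields uniqueness, is a true but nontrivial fact (a maximal dissipative operator is determined by its minimal section); you should cite or prove it, though the paper's proof does not justify uniqueness explicitly either.
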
}
\smallskip

{\it Proof:\m\m} The claim that the strong generator $\GothA^0$ of the
semigroup $\ULax$ is densely defined on $\Kscr$ and is dissipative
follows from Remark 3.4 and Theorem 1.3 in \cite{CranPazy}. On
invoking Theorem A2 in \cite{CranPazy} we obtain that $\GothA^0$ has a
maximal dissipative extension $\GothA$ (which may be set-valued) with
the same domain $\Dscr(\GothA)=\Dscr (\GothA^0)$. It follows from
Lemma 2.2 and Definition 2.3 in \cite{CranPazy} that for any
$z_0\in\Dscr(\GothA^0)$, $\GothA^0 z_0$ is the element of minimal norm
in the closed and convex set $\GothA z_0$.

The second part of the above theorem is a straightforward consequence 
of Lemma 3.1, Lemma 3.2 and Corollary 3.1 in \cite{CranPazy}. 
$\quad\square$

If $\ULax$ and $\GothA$ are related as in the above theorem, then we
say that $\ULax$ {\it is determined by} $\GothA$. By $\GothA^0$ being
densely defined in $\Kscr$ we mean that $\Dscr(\GothA^0)$ is dense in
$\Kscr$. In the linear case, $\GothA^0=\GothA$.

{\color{blue}
\begin{theorem}[Crandall-Pazy] \label{Crandall-Pazy}
Let $\GothA$ be a maximal dissipative set-valued operator on $Z$ with
domain $\Dscr(\GothA)\subset Z$. For each $z_0\in\Dscr(\GothA)$ let
$\GothA^0z_0$ denote the element of smallest norm in the set $\GothA
z_0$. Then there is a strongly continuous contractive semigroup of 
nonlinear operators $\ULax$ acting on $\overline{\Dscr(\GothA)}$ 
(which is convex) such that $\GothA^0$ is the generator of $\ULax$.
\end{theorem}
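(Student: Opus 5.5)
The plan is to construct $\ULax$ by the exponential formula: first handle the resolvents, then pass to the limit in the implicit Euler scheme, then identify the generator. Since $\GothA$ is maximal dissipative, for every $\lambda>0$ the operator $I-\lambda\GothA$ is surjective onto $Z$ (Minty's theorem) and injective with a single-valued inverse. The resolvent $J_\lambda=(I-\lambda\GothA)^{-1}:Z\rarrow\Dscr(\GothA)$ is therefore a nonexpansive map. This follows from dissipativity: if $z_i-\lambda g_i=f_i$ with $g_i\in\GothA z_i$, then $\|z_1-z_2\|^2\leq\langle f_1-f_2,z_1-z_2\rangle$. The same resolvent argument gives convexity of $\overline{\Dscr(\GothA)}$. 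For $z\in\overline{\Dscr(\GothA)}$ one has $J_\lambda z\to z$ as $\lambda\to 0$, and $\overline{\Dscr(\GothA)}$ can be written as the closure of the convex hull of such limits, or via the standard fact that $\overline{\Dscr(\GothA)}$ is convex for maximal monotone operators. We may invoke this fact as recalled at the start of this section.

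Next, for $z_0\in\Dscr(\GothA)$, define
\[
\ULax_t z_0 = \lim_{n\to\infty} J_{t/n}^n z_0 .
\]
The key estimate is the Crandall--Liggett inequality
\[
\|J_\mu^m z_0 - J_\lambda^n z_0\| \leq \Bigl( (n\lambda-m\mu)^2 + n\lambda(\lambda-\mu) + m\mu(\mu-\lambda)\Bigr)^{1/2} |\GothA^0 z_0| ,
\]
valid for $\mu\leq\lambda$. It is proved by induction on $(m,n)$ from the resolvent identity
\[
J_\lambda z = J_\mu\Bigl(\tfrac{\mu}{\lambda}z + \tfrac{\lambda-\mu}{\lambda}J_\lambda z\Bigr)
\]
together with $\|J_\lambda z - z\|\leq \lambda|\GothA^0 z|$. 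This estimate gives a Cauchy sequence, uniformly for $t$ in compact intervals. The limit is contractive because each $J_{t/n}^n$ is nonexpansive. Contractivity lets us extend $\ULax_t$ to $\overline{\Dscr(\GothA)}$. The semigroup property, strong continuity, and the Lipschitz bound $\|\ULax_t z_0 - z_0\|\leq t|\GothA^0 z_0|$ all follow from the same estimate.

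The main obstacle is identifying $\GothA^0$ as the generator, i.e. showing that $\frac{1}{t}(\ULax_t z_0 - z_0)\to\GothA^0 z_0$ for every $z_0\in\Dscr(\GothA)$ and that the limit exists only on $\Dscr(\GothA)$. The first tool is the integral (Bénilan) inequality. From dissipativity applied to the Euler steps one gets, for any $w\in\Dscr(\GothA)$ and $g\in\GothA w$,
\[
\|\ULax_t z_0 - w\|^2 - \|z_0-w\|^2 \leq 2\int_0^t \langle g,\ULax_s z_0 - w\rangle\, ds .
\]
This shows that any weak cluster point $v$ of $\frac{1}{t}(\ULax_t z_0 - z_0)$ satisfies $\langle v-g,z_0-w\rangle\geq 0$. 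Maximality then gives $v\in\GothA z_0$. The Lipschitz bound gives $\|v\|\leq|\GothA^0 z_0|$, so $v=\GothA^0 z_0$ by uniqueness of the minimal-norm element. Weak convergence with $\limsup\|\cdot\|\leq\|v\|$ in a Hilbert space upgrades to strong convergence.

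Conversely, if the limit exists at some $z\in\overline{\Dscr(\GothA)}$, then the same integral inequality shows that the limit $v$ satisfies the monotonicity relation against the whole graph. Maximality then forces $z\in\Dscr(\GothA)$ with $v\in\GothA z$, and so again $v=\GothA^0 z$. Alternatively, this last part can simply be quoted from Crandall--Pazy, consistent with Theorem \ref{Putin}.
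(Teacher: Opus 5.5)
The overall route is the standard one: resolvents via Minty, the exponential formula $\ULax_tz_0=\lim_n J_{t/n}^nz_0$, and identification of the generator through the integral inequality plus maximality. However, the estimate you call the key one is false as written, so the Cauchy argument built on it fails.

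Its radicand equals $(n\lambda-m\mu)^2+(\lambda-\mu)(n\lambda-m\mu)$. This is negative, e.g.\ for $\lambda=1$, $\mu=0.4$, $n=1$, $m=3$. Worse, in exactly the case you need, $n\lambda=m\mu=t$, it vanishes, so it would assert $J_{t/m}^mz_0=J_{t/n}^nz_0$ for all $m,n$. Already for $Z=\rline$ and $\GothA z=-z$ one has $J_1z_0=z_0/2\neq 4z_0/9=J_{1/2}^2z_0$, so no induction can prove it.

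What your resolvent identity actually gives is a recursion. Set $a_{m,n}=\|J_\mu^mz_0-J_\lambda^nz_0\|$ with $0<\mu\leq\lambda$; then $a_{m,n}\leq\frac{\mu}{\lambda}a_{m-1,n-1}+\frac{\lambda-\mu}{\lambda}a_{m-1,n}$, with $a_{m,0}\leq m\mu\|\GothA^0z_0\|$ and $a_{0,n}\leq n\lambda\|\GothA^0z_0\|$. Solving it and using Cauchy--Schwarz yields the genuine Crandall--Liggett bound (for $m\geq n$):
\[
\|J_\mu^mz_0-J_\lambda^nz_0\|\leq\Bigl(\bigl[(m\mu-n\lambda)^2+m\mu(\lambda-\mu)\bigr]^{1/2}+\bigl[(m\mu-n\lambda)^2+n\lambda(\lambda-\mu)\bigr]^{1/2}\Bigr)\|\GothA^0z_0\|.
\]
For $m\mu=n\lambda=t$ this is at most $2t\|\GothA^0z_0\|/\sqrt{n}$, which is the uniform Cauchy property you need. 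In a Hilbert space you could alternatively build $\ULax$ from the Yosida approximations.

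There are also two smaller slips. First, the identification step has the wrong sign. Dividing your integral inequality by $t$ gives $\langle\frac1t(\ULax_tz_0-z_0),z_0-w\rangle\leq\frac1t\int_0^t\langle g,\ULax_sz_0-w\rangle\,\dd s$, hence $\langle v-g,z_0-w\rangle\leq 0$. That is what maximal \emph{dissipativity} uses; with ``$\geq 0$'' maximality yields nothing, and the same correction applies to your converse direction. Second, your first justification of convexity is circular: you would need $J_\lambda z\to z$ for every $z$ in the closed convex hull of $\Dscr(\GothA)$, not just in $\overline{\Dscr(\GothA)}$. Falling back on the fact recalled in Section~\ref{sec2} is legitimate, though.

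Once repaired, your sketch is a self-contained reconstruction of the generation theorem. The paper gives no argument of its own and simply quotes Theorem A1 of Crandall--Pazy, which is shorter but leaves all of this machinery to the reference.
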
}
\smallskip

The above theorem is taken from \cite{CranPazy}, see their Theorem
A1. It is a generalization of the well known
Lumer-Phillips theorem from linear semigroup theory. We refer to
\cite{Engel_Nagel, Pazy2} and \cite{obs_book} for linear semigroup
theory.
% A related result has been
%proved in \cite{CisVel} for time varying nonlinear systems in which
%the nonlinear operator is continuously Fr\'echet differentiable. 
%It is proved in \cite{CisVel} that if the operator measure of the
%Fr\'echet differential of the nonlinear operator is negative for all
%$t\geq 0$, then the system is contractive. 

%%%%%%%%%%**********%%%%%%%%%%**********%%%%%%%%%%**********%%%%%%%%%%
\section{Projected differential inclusions and equations on closed and 
         convex sets} \label{sec3} % Section 3

In this section we investigate projected differential inclusions and
differential equations on closed and convex subsets of a real Hilbert
space $Z$.

\subsection{Projections on closed convex sets}
Following \cite{brog,cojo1}, we denote the {\rm projection} of $z\in
Z$ onto the closed and convex subset $\Kscr \subset Z$ by
$P_\Kscr(z)$. For any $z\in Z$, $P_\Kscr(z)$ is a unique element in
$\Kscr$ such that
\begin{equation*} \label{projection}
   \|P_\Kscr(z)-z\| \m=\m \underset{y\in \Kscr}{\rm min}\|y-z\|.
\end{equation*}

For any $z \in \Kscr$, the {\em tangent cone} $T_\Kscr(z)$ of the set
$\Kscr$ at $z$ is the set of all those $v\in Z$ such that there exist
a sequence $(z_n)_{n \in \mathbb{N}} \in \Kscr$ and a sequence
$\delta_n \longrightarrow 0^+$ for $n\rarrow \infty$, which satisfy
$$\lim\limits_{n \to + \infty}\frac{z_n-z}{\delta_n} \m=\m v.$$
This is a general definition, valid also for non-convex $\Kscr$. For
convex $\Kscr$, $T_\Kscr(z)$ can be defined more simply as the closure
of the set of all the vectors of the form $\l(x-z)$, where $x\in\Kscr$
and $\l>0$. 

The {\em normal cone} $N_\Kscr(z)$ at a point $z\in \Kscr$ is
\begin{equation} \label{N}
   N_\Kscr(z) \m=\m \left\{ p\in Z\ \Big|\, \langle p,z-q \rangle 
   \geq 0 \ \ \ \forall q\in\Kscr \right\},
\end{equation}
and $N_\Kscr(z)= \emptyset$ for $z \notin \Kscr$. Thus, 
$\Dscr(N_\Kscr)=\Kscr$ and $N_\Kscr(z)=\{0\}$ if $z\in{\rm int}\m
\Kscr$. The following theorem is a particular case of 
\cite[Prop.~1]{moreau}.

{\color{blue} \begin{theorem}[Moreau] \label{Mor} 
With the above notation, for $z\in \Kscr$ and $x\in Z$, if we denote
the projection of $x$ onto $T_\Kscr(z)$ by $p=P_{T_\Kscr(z)}(x)$ and
the projection of $x$ onto $N_\Kscr(z)$ by $q= P_{N_\Kscr(z)}(x)$,
then $x=p+q$ and $\langle q, p\rangle =0$.
\end{theorem}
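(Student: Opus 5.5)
The plan is to derive the statement from two facts. First, $T_\Kscr(z)$ and $N_\Kscr(z)$ are mutually polar closed convex cones. Second, projections onto a closed convex cone are characterized by variational inequalities. Fix $z\in\Kscr$ and $x\in Z$.

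\textbf{Step 1: polarity.} We check that
$$N_\Kscr(z)=\{q\in Z \mid \langle q,v\rangle\leq 0 \ \forall v\in T_\Kscr(z)\},$$
and conversely that $T_\Kscr(z)=\{v \mid \langle q,v\rangle\leq 0\ \forall q\in N_\Kscr(z)\}$. The inclusion of $N_\Kscr(z)$ in the polar of $T_\Kscr(z)$ is immediate from \rfb{N}. Indeed, $\langle q,\l(y-z)\rangle\leq 0$ for $y\in\Kscr$ and $\l>0$, and this passes to the closure. The reverse inclusion follows by taking $v=y-z$. For the converse identity, the polar of $N_\Kscr(z)$ contains $T_\Kscr(z)$. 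If some $v$ in that polar were not in the closed convex cone $T_\Kscr(z)$, the Hahn--Banach separation theorem in the Hilbert space $Z$ would give $q$ with $\langle q,w\rangle\leq 0<\langle q,v\rangle$ for all $w\in T_\Kscr(z)$. Such a $q$ lies in $N_\Kscr(z)$, which is a contradiction. Both cones are closed and convex, so the projections $p$ and $q$ are well defined.

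\textbf{Step 2: characterize $p$.} For a closed convex cone $C$, $p=P_C(x)$ holds iff $p\in C$ and $\langle x-p,c-p\rangle\leq 0$ for all $c\in C$. Taking $c=0$ and $c=2p$ gives $\langle x-p,p\rangle=0$. Then $\langle x-p,c\rangle\leq 0$ for all $c\in C$, so $x-p$ lies in the polar cone $C^\circ$. Applying this with $C=T_\Kscr(z)$ yields $r:=x-p\in N_\Kscr(z)$ and $\langle r,p\rangle=0$.

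\textbf{Step 3: identify $r$ with $q$.} We show $r=P_{N_\Kscr(z)}(x)$, again through the variational inequality. We have $r\in N_\Kscr(z)$. For any $n\in N_\Kscr(z)$,
$$\langle x-r,n-r\rangle=\langle p,n\rangle-\langle p,r\rangle=\langle p,n\rangle\leq 0,$$
because $p\in T_\Kscr(z)$ and $n$ is in its polar by Step 1. Hence $r=q$, so $x=p+q$ and $\langle q,p\rangle=0$.

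The main point requiring care is Step 1, namely the closedness of $T_\Kscr(z)$ and the bipolar identification. Step 3 actually needs only the easy inclusion $N_\Kscr(z)\subset T_\Kscr(z)^\circ$ (used as $\langle p,n\rangle\leq0$), and Step 2 needs the reverse inclusion $T_\Kscr(z)^\circ\subset N_\Kscr(z)$, which comes from taking $v=y-z$. So the separation argument is not needed for the statement itself; Step 1 reduces to those two elementary inclusions together with the closedness of $T_\Kscr(z)$, which holds by its definition as a closure.
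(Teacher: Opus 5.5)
Your proof is correct, but it takes a different route from the paper. The paper gives no argument: it cites Moreau's Proposition~1. That result says that for a pair of mutually polar closed convex cones $C$, $C^\circ$ in a Hilbert space, $x=P_C(x)+P_{C^\circ}(x)$ with orthogonal summands, and conversely any such orthogonal splitting consists of the two projections. The paper leaves implicit that $T_\Kscr(z)$ and $N_\Kscr(z)$ are polar to each other.

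You make that identification explicit from the definitions. In doing so you also confirm that the sign convention in \rfb{N} gives the outward normal cone, which is the one polar to $T_\Kscr(z)$. You then reprove the part of Moreau's result that is needed, using the variational inequality for projections onto closed convex sets. Your Step~2 is the forward direction, and your Step~3 is the converse (uniqueness) direction of Moreau's proposition applied to $N_\Kscr(z)$.

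What each approach buys:
\begin{itemize}
\item The citation is brief and yields the full equivalence.
\item Your version is self-contained. As you observe, it shows that only the single identity $N_\Kscr(z)=T_\Kscr(z)^\circ$ is needed, so the bipolar/Hahn--Banach part of Step~1 can be dropped.
\end{itemize}

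One small gap in exposition: you assert that $T_\Kscr(z)$ is convex. This is needed for $P_{T_\Kscr(z)}$ to be well defined, and it deserves the one-line check. The identity
$\l_1(y_1-z)+\l_2(y_2-z)=(\l_1+\l_2)\bigl(\tfrac{\l_1y_1+\l_2y_2}{\l_1+\l_2}-z\bigr)$
together with convexity of $\Kscr$ shows that the cone $\{\l(y-z)\}$ is convex, and hence so is its closure.
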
} 
\smallskip

For $z,v \in Z$, let us define
\begin{equation*} \label{Pi_k}
   \Pi_\Kscr\left(z,v\right) \m=\m \left\{\begin{array}{c c}
   \underset{w \in T_\Kscr(z)}{\rm arg\m min} \|w-v\| & \mbox{if } 
   z\in \Kscr\m,\\ \emptyset & \mbox{if }z\notin\Kscr \m,\end{array}
   \right. 
\end{equation*}
where $\underset{s \in S}{\rm arg\m min}\m F(s) $ denotes the element
in $S$ that minimizes the function $F$. It is easy to see that
$\Pi_\Kscr\left(z,v \right)$ is the projection of $v$ onto the tangent
cone $T_\Kscr(z)$, i.e., 
$$\Pi_\Kscr\left(z,v\right)=P_{T_\Kscr(z)}
\left(v\right)\m.$$

By Moreau's Theorem \ref{Mor}, we obtain that for every $z \in \Kscr$
and $v\in Z$, $\Pi_{\Kscr}(z,v)$ is the element of minimal norm in the
set $v-N_\Kscr(z)$. In another words, we have
\begin{equation} \label{Copenhagen}
   \Pi_\Kscr(z,v) \m=\m \underset{q \in v-N_\Kscr(z)}{\rm arg\m min}
   \|q\|.
\end{equation}

Actually computing the projection $\Pi_\Kscr
(z,v)$ is not a trivial matter. If $Z=\rline^n$ and $\Kscr$ is 
generated by $n$ independent vectors, then there is an algorithm for 
computing $\Pi_\Kscr(z,v)$ in finitely many steps, see \cite{Nemeth}.

We extend the definition of $\Pi_\Kscr(\cdot,\cdot)$ so that the
second argument is a set $S\subset Z$. For $z \in Z$, we denote by
$\Pi_\Kscr(z,S)$ the set $\left\{\Pi_\Kscr(z,s)\big|\, s\in S
\right\}$. Thanks to equation \rfb{Copenhagen}, for $z\in\Kscr$, we
obtain that for any $S\subset Z$, \vspace{-1mm}
\begin{equation} \label{LutonTown}
   \Pi_\Kscr(z,S)=\left\{\underset{q \in s-N_\Kscr(z)}{\rm arg\m min} 
   \|q\|\Bigg|\, s\in S \right\}\m.
\end{equation}

\subsection{Projected differential inclusions on closed and convex 
sets} Let $\Kscr$ be a closed and convex subset of $Z$ such that 
${\rm int} (\Kscr)\neq \emptyset$. We will investigate the abstract 
Cauchy problem \rfb{Onana}, where $\GothA$ is a maximal dissipative 
(possibly set-valued) operator with its domain $\Dscr(\GothA)$ such 
that $\Kscr\subset\overline{\Dscr(\GothA)}$. 

\begin{definition} \label{Solution_def}
A Lipschitz continuous and right differentiable function $z:\,[0,
\infty)$ $\rarrow\Dscr(\GothA)\cap\Kscr$ which satisfies \rfb{Onana} 
for all $t\geq 0$, is called a {\em solution} of \rfb{Onana}.
\end{definition}

Note that the Lipschitz continuity of $z(\cdot)$ implies that
$z(\cdot)$ is differentiable almost everywhere. The following theorem 
states the existence of unique solutions of the projected differential
inclusion \rfb{Onana}.

\smallskip
{\color{blue} \begin{theorem} \label{th1}
Define the operator $\GothA^0_\Kscr$ as follows: \vspace{-1mm}
\begin{equation} \label{eq:2.9}
   \GothA^0_\Kscr z \m=\m \underset{w \in \Pi_{\Kscr}(z,\GothA z)} 
   {\rm arg\m min} \|w\| \m\m ,
\end{equation}
with $\Dscr(\GothA^0_\Kscr)=\Dscr(\GothA)\cap\Kscr$. Then\m:
\begin{enumerate}[(i)]
\item \label{item0} The operator $\GothA_\Kscr=\GothA-N_\Kscr$, with
      the domain $\Dscr\left(\GothA_\Kscr\right)=\Dscr\left(\GothA^0
      _\Kscr\right)$, is maximal dissipative, and for every $z\in
      \Dscr\left(\GothA_\Kscr\right)$, $\GothA^0_\Kscr z$ is the 
      element of minimal norm in the set $\GothA_\Kscr z$.
\item \label{item1} $\GothA^0_\Kscr$ is the strong generator of a 
      strongly continuous semigroup $\ULax^\Kscr=\left(\ULax^\Kscr_t
      \right)_{t\geq 0}$ of contractions on $\Kscr$.
\item \label{item2} For every $z_0 \in \Dscr(\GothA^0_\Kscr)$, the 
      function $z:[0,\infty)\rarrow Z$ defined by $z(t)=\ULax^\Kscr_t
      z_0$ is Lipschitz continuous and right differentiable,
\begin{equation*} \label{eq:3}
   z(t)\in\Dscr(\GothA^0_\Kscr),\qquad \frac{\dd^+}{\dd t}z(t) \m=\m 
   \GothA^0_\Kscr z(t) \FORALL t\geq 0,
\end{equation*}
and $\GothA^0_\Kscr z(t)$ is right continuous for all $t\geq 0$.

\item \label{item3} For every $z_0 \in \Dscr(\GothA^0_\Kscr)$, the 
      function $ z:[0,\infty)\rarrow Z$ defined by $z(t)=
      \ULax^\Kscr_t z_0$ is the unique solution of \rfb{Onana}.
\end{enumerate}
\end{theorem}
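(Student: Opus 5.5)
The plan is to reduce everything to the general theory of Section \ref{sec2}. First we show that $\GothA_\Kscr=\GothA-N_\Kscr$ is maximal dissipative using Rockafellar's Theorem \ref{Rockafellar}. Then we identify its minimal-norm section with $\GothA^0_\Kscr$ via Moreau's Theorem \ref{Mor} and \rfb{Copenhagen}. Finally we invoke the Crandall--Pazy Theorem \ref{Crandall-Pazy} together with Theorem \ref{Putin} to obtain (\ref{item1})--(\ref{item3}).

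For (\ref{item0}), take $\Nscr_1=\GothA$ and $\Nscr_2=-N_\Kscr$. The normal cone $N_\Kscr$ is the subdifferential of the indicator function of the closed convex set $\Kscr$, which is a proper lower semicontinuous convex function. Hence $N_\Kscr$ is maximal monotone, so $-N_\Kscr$ is maximal dissipative, with $\Dscr(-N_\Kscr)=\Kscr$.

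To apply Theorem \ref{Rockafellar} we need $\Dscr(\GothA)\cap{\rm int}\,\Kscr\neq\emptyset$. Since ${\rm int}\,\Kscr$ is a nonempty open set contained in $\overline{\Dscr(\GothA)}$, it meets $\Dscr(\GothA)$. Thus $\GothA-N_\Kscr$ is maximal dissipative on $\Dscr(\GothA)\cap\Kscr$.

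Next, fix $z\in\Dscr(\GothA)\cap\Kscr$. Then $\GothA_\Kscr z=\bigcup_{s\in\GothA z}(s-N_\Kscr(z))$. By \rfb{Copenhagen}, the minimal-norm element of each slice $s-N_\Kscr(z)$ is $\Pi_\Kscr(z,s)$. Therefore
\[
\min_{w\in\GothA_\Kscr z}\|w\|=\min_{s\in\GothA z}\|\Pi_\Kscr(z,s)\|,
\]
and the minimizer over $\GothA_\Kscr z$ lies in $\Pi_\Kscr(z,\GothA z)$, which is contained in $\GothA_\Kscr z$. Because $\GothA_\Kscr z$ is closed and convex (maximal dissipativity), it has a unique minimal-norm element. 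That element therefore equals $\GothA^0_\Kscr z$ as defined in \eqref{eq:2.9}.

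This also settles existence and uniqueness of the argmin in \eqref{eq:2.9}, which a priori is over a possibly nonconvex set $\Pi_\Kscr(z,\GothA z)$. This identification is the step I expect to need the most care. One must check that attainment of the minimum over the union coincides with attainment over the projected set. The inclusion of $\Pi_\Kscr(z,\GothA z)$ in $\GothA_\Kscr z$, combined with uniqueness in the convex superset, handles it.

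For (\ref{item1}), Theorem \ref{Crandall-Pazy} gives a contraction semigroup $\ULax^\Kscr$ on $\overline{\Dscr(\GothA_\Kscr)}$ with generator $\GothA^0_\Kscr$. It remains to show $\overline{\Dscr(\GothA)\cap\Kscr}=\Kscr$. Take $x\in\Kscr$ and $x_0\in\Dscr(\GothA)\cap{\rm int}\,\Kscr$. The points $x_\e=(1-\e)x+\e x_0$ lie in ${\rm int}\,\Kscr$. Since ${\rm int}\,\Kscr\subset\overline{\Dscr(\GothA)}$ is open, each $x_\e$ is a limit of points of $\Dscr(\GothA)\cap{\rm int}\,\Kscr$, and $x_\e\to x$ as $\e\to0$.

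Item (\ref{item2}) is then the second part of Theorem \ref{Putin}, applied to $\ULax^\Kscr$, whose unique maximal dissipative extension of the generator is $\GothA_\Kscr$.

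For (\ref{item3}), $z(t)=\ULax^\Kscr_t z_0$ satisfies $\frac{\dd^+}{\dd t}z(t)=\GothA^0_\Kscr z(t)\in\Pi_\Kscr(z(t),\GothA z(t))$, so it is a solution of \rfb{Onana}. Conversely, any solution satisfies $\frac{\dd^+}{\dd t}z(t)\in\Pi_\Kscr(z(t),\GothA z(t))\subset\GothA_\Kscr z(t)$. By the uniqueness statement for the differential inclusion in Theorem \ref{Putin}, it coincides with $\ULax^\Kscr_t z_0$.
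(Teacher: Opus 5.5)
Your proof is correct and follows the paper's route. You use Rockafellar's Theorem \ref{Rockafellar} with $-N_\Kscr$ for maximal dissipativity and (\ref{Copenhagen}) to identify $\GothA^0_\Kscr z$ as the minimal-norm element of $\GothA_\Kscr z$; then the Crandall--Pazy Theorem \ref{Crandall-Pazy} and Theorem \ref{Putin} give (ii)--(iv), with uniqueness via $\Pi_\Kscr(z,\GothA z)\subset\GothA_\Kscr z$. You also supply two details the paper leaves implicit: that the argmin in (\ref{eq:2.9}) over the possibly nonconvex set $\Pi_\Kscr(z,\GothA z)$ exists and is unique, and that $\overline{\Dscr(\GothA)\cap\Kscr}=\Kscr$, so the Crandall--Pazy semigroup indeed acts on $\Kscr$.
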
} \vspace{2mm}

Notice that for any $z\in\Dscr(\GothA_\Kscr)$, we have 
$\GothA_\Kscr^0 z\in\Pi_\Kscr(z,\GothA z)\subset\GothA_\Kscr z$.

\medskip
{\it Proof:\m\m} First we prove \rfb{item0}. From the definition of
$\GothA^0_\Kscr$ (in \rfb{eq:2.9}) and from \rfb{LutonTown}, we 
obtain that for any $z\in\Dscr(\GothA^0_\Kscr)$,
\begin{equation} \label{Min_el}
   \GothA^0_\Kscr z \m=\m \underset{q \in \GothA z-N_\Kscr(z)}{\rm 
   arg\m min} \|q\|\m.
\end{equation}
It follows from the definition of $N_\Kscr$ and the discussion given
above \cite[Theorem 3]{Rocka} that $N_\Kscr$ is a maximal monotone
operator and $\Dscr(N_\Kscr)=\Kscr$. Thus, $-N_\Kscr$ is maximal
dissipative. Since the interior of $\Kscr$ is nonempty by assumption
and the closure of $\Dscr(\GothA)$ contains $\Kscr$, it follows that
$\Dscr(\GothA) \cap {\rm int} \Kscr$ is dense in ${\rm int} \Kscr$, in
particular, it is not empty. Thus, we obtain that $\Dscr(\GothA)
\cap{\rm int}( \Dscr(-N_\Kscr))\neq\emptyset$. It follows from
Rockafellar's Theorem \ref{Rockafellar} that $\GothA_\Kscr$ is maximal
dissipative. It follows from \rfb{Min_el} that for any $z_0\in\Dscr
(\GothA_\Kscr)$, $\GothA_\Kscr^0z_0$ is the element of smallest norm
in the set $\GothA_\Kscr z_0$, which completes the proof of 
\rfb{item0}. Part \rfb{item1} follows from the Crandall-Pazy theorem 
(Theorem \ref{Crandall-Pazy}).

Item~\rfb{item2} and Item~\rfb{item3} follow from
the second part of Theorem \ref{Putin}, by replacing $\GothA$ and
$\GothA^0$ in Theorem \ref{Putin} with $\GothA_\Kscr$ and
$\GothA^0_\Kscr$, respectively. For the uniqueness part, notice that
any solution $z$ of \rfb{Onana} is automatically a solution of
\vspace{-2mm}
$$ \frac{\dd^+}{\dd t}z(t)\in \GothA_\Kscr z(t) \FORALL t\geq 0,$$ 
which is unique according to Theorem \ref{Putin}. $\quad\square$

%%%%%%%%%%**********%%%%%%%%%%**********%%%%%%%%%%**********%%%%%%%%%%
\section{Differential inclusions with finitely many scalar 
   constraints} \label{sec4} % Section 4

By a {\em scalar constraint} we mean a constraint imposed on a 
continuous linear functional of the state of the system. It is worth 
examining the case when finitely many scalar constraints are imposed 
on the solution of a maximally dissipative differential inclusion, 
because such systems occur in various applications. For instance, the 
differential inclusion may describe a closed-loop system where the 
controller contains one or more saturating integrators, thus
constraining the integrator states to fixed intervals \cite{art122}. 
The examples discussed in Sections \ref{sec7} and \ref{sec8} fit this
framework. First we consider the case of a single scalar constraint.

%%%%%%%%%%**********%%%%%%%%%%**********%%%%%%%%%%**********%%%%%%%%%%
\subsection{Differential inclusions with one scalar constraint.} We 
give an application of Theorem \ref{th1} to a differential inclusion
\begin{equation} \label{single-value}
   \frac{\dd^+}{\dd t}z(t) \m\in\m \GothA z(t) \ \ \ \ \forall 
   t\geq 0\m,
\end{equation}
where $\GothA:\Dscr(\GothA)\rarrow Z$ is a maximal dissipative
(possibly nonlinear) operator on $Z$ with $\Dscr(\GothA)$ dense in
$Z$. The initial condition is $z(0)=z_0\in\Dscr(\GothA)$. It follows
from Theorems~\ref{Putin} and \ref{Crandall-Pazy} that $\GothA$
determines a semigroup of (possibly nonlinear) contraction operators,
denoted by $\left(\ULax_t\right)_{t\geq 0}$. Moreover, the unique 
solution of \rfb{single-value} is $z(t)=\ULax_tz_0$. Our aim is to 
find the correct model of a modification of this system such that a 
one-dimensional component of the state $z(t)$ is constrained to a 
closed interval, but the behavior remains unchanged if this 
one-dimensional component is in the interior of the interval. A 
simpler version of this analysis is in our conference paper 
\cite{singh2023CDC}, where the operator $\GothA$ is assumed to be 
linear.

We denote by $Z_c$ a one dimensional subspace of $Z$, such that
$Z_c=\{\l \varphi\,|\, \l\in\rline\}$, where $\varphi\in Z$ with 
$\|\varphi\|_Z=1$. On $Z$ we define an operator $P:Z\rarrow Z_c$ by
\begin{equation} \label{Proj_1}
   P z \m=\m \langle z,\varphi\rangle \varphi \FORALL z\in Z.
\end{equation}
Thus, $P$ is the orthogonal projection onto $Z_c$ and $I-P$ is 
the orthogonal projection onto $Z_c^{\perp}$. For $z\in Z$ we 
denote \vspace{-3mm}
\begin{equation*} \label{part_x}
   \m\ \ \ \ \bbm{z_1\\ z_2} \m=\m \bbm{\langle z,\varphi\rangle \\ 
   (I-P)z},
\end{equation*}
so that we have $z=z_1\varphi+z_2$. By a slight abuse of notation, 
we identify $z$ with $[z_1\ z_2]^\top$: \vspace{-2mm}
\begin{equation} \label{x_parti}
   z \m=\m \bbm{z_1\\ z_2} \ \ \ \forall z\in Z \m,\ \ z_1\in\rline,\ 
   \ z_2\in Z_c^\perp \m.
\end{equation} 
Using the above splitting of vectors in $Z$, we have that for any
$z(t)=\sbm{z_1(t)\\z_2(t)}\in \Dscr(\GothA)\subset Z$, we can rewrite
\rfb{single-value} as
\begin{equation*} \label{eq:1bis}
   \frac{\dd^+}{\dd t} \bbm{z_1(t) \\ z_2(t)} \m\in\m \GothA\bbm{ 
   z_1(t) \\ z_2(t) } \m=\m \bbm{\langle\GothA z(t),\varphi\rangle \\
   (I-P)\GothA z(t)} \FORALL t\geq 0 \m.
\end{equation*}

Let $w_{\mathrm{min}}$, $w_{\mathrm{max}}\in \rline$ such that
$w_{\mathrm{min}}<w_{\mathrm{max}}$. We want to constrain $z_1(t)$ to
the interval $[w_{\rm min},w_{\rm max}]$ for all $t\geq 0$. In other 
words, we want to constrain $z(t)$ to the closed and convex set 
$\Kscr$ for all $t\geq 0$, where
\begin{equation*}
   \begin{aligned} \Kscr \m=\m \left[w_{\rm min}, w_{\rm max} \right]
   \times Z_c^\perp \m=\m \left\{ \left. \bbm{z_1 \\ z_2} \in
   \begin{array}{c}\rline\vspace{-1mm}\\ \times\vspace{-1mm} \\ Z_c
   ^{\perp}\end{array}\n \m\m\right|\ z_1 \in \left[w_{\rm min},
   w_{\rm max}\right] \right\}. \end{aligned}
\end{equation*}
The projected differential inclusion is
\begin{equation} \label{Pibis}
   \frac{\dd^+}{\dd t} \bbm{z_1(t) \\ z_2(t)} \m\in\m \Pi_\Kscr
   \left(z(t),\GothA z(t)\right)\quad  t\geq 0\m,
\end{equation}
with the initial state $z(0)\in\Dscr(\GothA)\cap\Kscr$. For any 
$w,g\in\rline$, define the function $\Sscr$ by
\begin{equation} \label{saturation}
   \Sscr(w,g) \m= \left\{\begin{array}{lll} \text{\rm max}\{g,\m 0\}
   \hspace{0.47cm} \mbox{if}\hspace{0.5cm} w\leq w_{\rm min},\\
   g \hspace{1.2cm} \hspace{0.47cm} \mbox{if}\hspace{4.7mm} 
   w_{\rm min}<w<w_{\rm max},\\ \text{\rm min}\{g,\m 0\}\hspace{5.5mm} 
   \mbox{if}\hspace{0.5cm} w\geq w_{\rm max}. \end{array}\right.\m 
\end{equation}
Then for $z(t)\in\Dscr(\GothA)\cap\Kscr$, 
\begin{equation} \label{Proj_eqn}
   \Pi_\Kscr\left(z(t),\GothA z(t)\right) \m=\m \left\{ \left.
   \bbm{\Sscr\left(z_1(t),\langle q,\varphi\rangle\right)\\ 
   (I-P)q}\right|\ q\in\GothA z(t)\right\}. \vspace{-1mm}
\end{equation}

The function $\Sscr$ has been used extensively to constrain the state
of PI controllers in \cite{art122},\cite{Natarajan2017}. The operator
$\Pi_\Kscr$ is as stated in \rfb{Proj_eqn} and \rfb{saturation} since
the normal cone $N_\Kscr$ of $\Kscr$ is \vspace{-2mm}
$$ N_\Kscr \left(\sbm{z_1(t)\\ z_2(t)}\right) = \left\{
   \begin{array}{c c c} (-\infty ,0]\times \{0\} & \text{if} & z_1(t)
   = w_{\rm min},\\ \hspace{-0.3cm}\{0\}\times \{0\} & \text{if} & 
   \hspace{-0.2cm}w_{\rm min}<z_1(t)<w_{\rm max},\\ \left[0,\infty
   \right)\times \{0\}& \text{if}& z_1(t) = w_{\rm max}.
   \end{array}\right.$$
The above formula follows from the definition of the normal cone in
\rfb{N}. Thus, when $z\in\Dscr(\GothA)\cap\Kscr$ and $z_1=w_{\rm 
min}$, then $\GothA_\Kscr=\GothA-N_\Kscr$ (as introduced in Theorem 
\ref{th1}) is given by \vspace{-2mm}
$$ \GothA_\Kscr z \m=\m \left\{\left.\bbm{ \langle q,\varphi\rangle + 
   [0,\infty)\m \\ (I-P)q}\right|\ q\in \GothA z\right\}\m.$$
Similarly when $z_1=w_{\rm max}$, then
$$ \GothA_\Kscr z \m=\m \left\{\left.\bbm{ \langle q,\varphi\rangle - 
   [0,\infty)\m \\ (I-P)q}\right|\ q\in\GothA z\right\}\m,$$
and, when $w_{\rm min}<z_1<w_{\rm max}$, then $\GothA_\Kscr z=\GothA
z$. It follows that if $\GothA$ is single-valued, then \vspace{-2mm}
$$ \GothA_\Kscr^0 z \m=\m \bbm{{\rm max}\{\langle\GothA z,\varphi
   \rangle,\m 0\}\\ (I-P) \GothA z} \ \mbox{ for }\ \ z_1 =
   w_{\rm min} \m,$$
$$ \GothA_\Kscr^0 z \m=\m \bbm{{\rm min}\{\langle\GothA z,\varphi
   \rangle,\m 0\}\\ (I-P) \GothA z} \ \mbox{ for }\ \ z_1 =
   w_{\rm max} \m,$$
and finally, when $w_{\rm min}<z_1<w_{\rm max}$, then 
$\GothA_\Kscr^0 z=\GothA z$.

The conclusion of this subsection is that the operator $\GothA_\Kscr$
defined above is maximal dissipative and for every initial condition
$z(0)\in\Dscr(\GothA)$ that satisfies $z_1(0)\in [w_{\rm min},w_{\rm
max}]$, the differential inclusion $\frac{\dd^+} {\dd t}z(t)\in\GothA
_\Kscr z(t)$ (equivalently, \rfb{Pibis}) has a unique solution $z$ 
that is Lipschitz continuous and right differentiable for all $t\geq 
0$, and $z_1(t)\in [w_{\rm min}, w_{\rm max}]$. This follows from
\rfb{Proj_eqn} and Theorem~\ref{th1}.

%%%%%%%%%%**********%%%%%%%%%%**********%%%%%%%%%%**********%%%%%%%%%%
\subsection{Extension to $n$ scalar constraints.} \label{Marx}

One might think that the problem of modeling a system with $n$ scalar 
constraints can be understood by applying one scalar constraint, then 
another, and so on. Unfortunately, that would not work. When we 
discussed the imposition of one scalar constraint (in the previous
subsection), we have assumed that the state space of the unconstrained 
system is the entire Hilbert space $Z$. Without this assumption, the 
simple formula \rfb{Proj_eqn} for the projection $\Pi_\Kscr$ is not 
true. Indeed, if $z(t)$ would be at the edge of the state space of the 
system and also $z_1(t)$ would be at one end of the interval $[w_{\rm 
min},w_{\rm max}]$, then the projection to the tangent cone of $\Kscr$
may be much more complicated. Thus, we cannot use \rfb{Proj_eqn} to
impose (to model) a scalar constraint on an already constrained 
system.

We will show how to model a system with two scalar constraints. The 
generalization to $n$ scalar constraints is straightforward.

Consider the differential inclusion \rfb{single-value}, with the same 
assumptions as in \rfb{single-value}. Let $\varphi_1$ and $\varphi_2$
be independent vectors in $Z$, without loss of generality we may 
assume that $\|\varphi_1\|=\|\varphi_2\|=1$. Suppose that the 
numbers $w^1_{\rm min}<w^1_{\rm max}$ and $w^2_{\rm min}<w^2_{\rm 
max}$ are given. We want to model the system resulting by imposing 
here two scalar interval contraints:
\begin{equation} \label{Nasrallah}
   \langle z(t),\varphi_1 \rangle \m\in\m [w^1_{\rm min},
   w^1_{\rm max}]\m, \qquad \langle z(t),\varphi_2 \rangle \m\in\m 
   [w^2_{\rm min},w^2_{\rm max}] \m.
\end{equation}

Let $Z_c={\rm span}\m\{\varphi_1,\varphi_2\}$ and let $P$ be the 
orthogonal projection from $Z$ to $Z_c$. After choosing an 
orthonormal basis in $Z_c$ and representing any $z\in Z_c$ by its
coordinates in this basis, we can identify $Z_c$ with $\rline^2$. We
represent vectors in $Z$ in the form \rfb{x_parti}, but now with 
$z_1\in\rline^2$ and $z_2\in Z_c^\perp$. The constraints 
\rfb{Nasrallah} define a closed convex set $\Kscr_0\subset\rline^2$,
and $\Kscr=\Kscr_0\times Z_c^\perp$. Then for $z(t)\in\Dscr(\GothA)
\cap\Kscr$ we have the following generalization of \rfb{Proj_eqn}:
\begin{equation} \label{Galloway}
   \Pi_\Kscr\left(z(t),\GothA z(t)\right) \m=\m \left\{ \left.
   \bbm{\Pi_{\Kscr_0}(z_1(t),Pq)\\ (I-P)q}\right|\ q\in\GothA z(t)
   \right\}. \vspace{-1mm}
\end{equation}
The projection $\Pi_{\Kscr_0}(z_1(t),Pq)$ acting in $\rline^2$ can be
computed, for instance, using the algorithm presented in
\cite{Nemeth}. The constrained system is described by \rfb{Pibis} and
its state trajectories (assumed to start in $\Kscr$) will remain in
$\Kscr$.

%%%%%%%%%%**********%%%%%%%%%%**********%%%%%%%%%%**********%%%%%%%%%%
\section{Incrementally scattering passive nonlinear systems}
   \label{sec5} % Section 5

In this section we recall some background on well-posed (possibly
nonlinear) systems following \cite{ShWeTu:art149, ShWe:art157,
ShWe:art158}. However, we slightly generalize the framework of
\cite{ShWeTu:art149, ShWe:art157, ShWe:art158} by allowing systems
whose state space is not a Hilbert space, but a closed and convex
subset of a Hilbert space. We denote by $U$ the {\it input space}, by
$K$ the {\it state space} and by $Y$ the {\it output space} of a
well-posed (possibly nonlinear) system \m $\Sigma$. Here $U$ and $Y$
are real Hilbert spaces while $K$ is a closed and convex subset of a
real Hilbert space $X$. The input and the output functions are $u\in
L^2_{\rm loc}([0,\infty); U)$ and $y\in L_{\rm loc}^2([0,\infty); Y)$,
respectively. For any $y\in L^2_{\rm loc}([0,\infty);Y)$ and any
$\tau\geq 0$, we denote by $\PPP_\tau y$ the truncation of $y$ to the
interval $[0,\tau]$. $\PPP_\tau y$ is in $L^2 ([0,\infty);Y)$ and it
is zero for $t>\tau$. We denote $\Uscr=L^2([0,\infty);U)$ and
$\Yscr=L^2((-\infty,0];Y)$. The space $\Yscr$ can be considered as a
subspace of $L^2((-\infty, \infty);Y)$ by extending any function in
$\Yscr$ to be zero for $t>0$.

\begin{definition} \label{Shejaiya_tragedy}
A {\it time invariant well-posed} (possibly nonlinear) system $\Sigma$
on the input space $U$, the state space $K$ and the output space $Y$,
consists of two families of (possibly nonlinear) continuous operators
\vspace{-1mm}
$$ \Sigma^{\rm  st} \m=\m (\Sigma^{\rm  st}_t)_{t\geq 0} \m,\quad
   \Sigma^{\rm out} \m=\m (\Sigma^{\rm out}_t)_{t\geq 0} \m,
   \vspace{-1mm}$$
where $\Sigma^{\rm st}_t:K\times\Uscr\rarrow K$ and $\Sigma^{\rm
out}_t:K\times\Uscr\rarrow L^2([0,t];Y)$ are such that the following
family of operators $\ULax=(\ULax_t)_{t\geq 0}$ is a strongly
continuous semigroup of (possibly nonlinear) operators acting on
$\Yscr\times K\times\Uscr$: for every $t\geq 0$,
\begin{equation} \label{J_Pasha}
   \ULax_t \m=\m \bbm{\Sscr_{-t} & 0 \sbluff & 0 \\
   0 & I & 0 \\ 0 & 0 & \SSS_t^*} \left[\begin{array}{c|c c} I & \ 
   \Sigma^{\rm out}_t \\ 0 & \ \Sigma^{\rm st} _t \\ \hline
   \ \ 0 \ \  & 0\ \ \ I \end{array}\right]\m,
\end{equation}
where $\SSS_t$ is the (unilateral) right shift operator by $t$ on
$\Uscr$ and $\Sscr_t$ is the bilateral right shift by $t$ acting on
$L^2((-\infty,\infty);Y)$. Thus, the adjoint operator $\SSS^*_t$ is
the operator of left shift by $t$ on $\Uscr$ and $\Sscr_{-t}$ is
the operator of left shift by $t$ on $L^2((-\infty,\infty);Y)$ (thus,
also on $\Yscr$).

Moreover, we require that for all $\tau\geq 0$, $v\in\Uscr$ and 
$x_0\in X$, 
\begin{eqnarray}\label{Identities}
   \Sigma^{\rm st}_\tau\bbm{x_0\\ u} = \Sigma^{\rm st}_\tau
   \bbm{x_0\\ \PPP_\tau u},\ \ \Sigma^{\rm out}_\tau\bbm{x_0\\ u}
   = \Sigma^{\rm out}_\tau\bbm{x_0\\ \PPP_\tau u},
\end{eqnarray}
for all $u\in L^2([0,\infty);U)$ and $x(0)\in K$. 
\end{definition}

The identities \rfb{Identities} are called the {\em causality
conditions}. 

\begin{remark}\label{Doku}
For $\Sigma$ as above, the state and the output function are described
as follows\m: For all $\tau\geq 0$, \vspace{-2mm}
\begin{equation} \label{Lea_Pais}
   x(\tau) \m=\m \Sigma^{\rm st}_\tau \bbm{x_0\\ \PPP_\tau u} \m,
   \qquad \PPP_\tau y \m=\m \Sigma^{\rm out}_\tau \bbm{x_0\\ 
   \PPP_\tau u} \m.\vspace{1mm}
\end{equation}
\end{remark}
A related definition has appeared in \cite{Mironchenko}. 

A nonlinear system $\Sigma$ is {\it Lipschitz continuous} if
$\Sigma_\tau^{\rm st}$ and $\Sigma_\tau^{\rm out}$ are Lipschitz
continuous (with Lipschitz constants that may depend on $\tau$). In
particular, $\Sigma$ is called {\em incrementally scattering passive}
if the following is true: If $x_{01}$, $x_{02}$ are initial states in
$K$, $u_1$, $u_2$ are input functions in $ \Uscr$, $x_1$, $x_2$ are
the corresponding state trajectories of the nonlinear system $\Sigma$,
as in \rfb{Lea_Pais}, and $y_1$, $y_2$ are the corresponding output
functions of $\Sigma$, as in \rfb{Lea_Pais}, then for all $\tau\geq
0$, the following {\it energy balance inequality} is satisfied\m:
\vspace{-1mm}
\begin{equation} \label{energy_balance_M} 
  \|x_1(\tau)-x_2(\tau)\|^2 + \hspace{-1mm}\int_0^\tau \|y_1(t)-
  y_2(t)\|^2 \dd t  \m\leq\m \|x_{01}-x_{02}\|^2 + \hspace{-1mm}
  \int_0^\tau \|u_1(t)-u_2(t)\|^2 \dd t \m.
\end{equation}
In other words, for any $\tau\geq 0$, the operator $\Sigma_\tau=
\bbm{\Sigma^{\rm st}_\tau\\ \Sigma^{\rm out}_\tau}$ is a contraction. 

Following \cite{ShWe:art158} every incrementally passive system has
the following ``local in time"  representation:

{\color{blue} \begin{theorem} \label{incre_passive} 
Using the notation of this section, consider an incrementally
scattering passive well-posed system $\Sigma$, described by two
families of operators as in \rfb{Lea_Pais}. Then there exist (possibly
nonlinear) operators $[\AB]$ (which may be multi-valued) and
$[\CD]$ (single-valued), 
$$ [\AB]:\Dscr([\AB]) \rarrow X \m,\qquad [\CD]:\Dscr([\AB])\rarrow 
   Y\m,$$ 
where $\Dscr([\AB])$ is dense in $K\times U$, such that the following 
holds:

If $u\in\Hscr^1((0,\infty);U)$ and $[x(0)\ u(0)]^\top\in\Dscr
([\AB])$, then the corresponding state trajectory $x$ and the output 
function $y$ satisfy $[x(t)\ u(t)]^\top \in\Dscr([\AB])$ for all 
$t\geq 0$ and \vspace{-2mm}
\begin{equation} \label{Ax+Bu}
  \left\{\begin{array}{ll}\frac{\dd^+}{\dd t} x(t) \m\in \m [\AB]
  \bbm{x(t)\\u(t)} \FORALL t\geq 0,\vspace{2mm}\\ y(t) \m=\m 
  [\CD]\bbm{x(t)\\ u(t)} \ \ \ \ \FORALL t\geq 0. \end{array}\right.
\end{equation} 
\end{theorem}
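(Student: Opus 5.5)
The plan is to go through the Lax--Phillips semigroup $\ULax$ from \rfb{J_Pasha}. The first step is to show that $\ULax$ is contractive on the closed convex set $\Yscr\times K\times\Uscr$ of the Hilbert space $\Yscr\times X\times\Uscr$. Take two triples $(y_{0i},x_{0i},u_i)$. After time $t$, the first component is the left shift of $y_{0i}$ concatenated with $\Sigma^{\rm out}_t$ on $[-t,0]$. The second component is $x_i(t)$, and the third is $\SSS^*_t u_i$. We then expand $\|\ULax_t z_1-\ULax_t z_2\|^2$. The part of $y_{01}-y_{02}$ that is shifted out contributes nothing, so the $\Yscr$ term is at most $\|y_{01}-y_{02}\|^2$ plus $\int_0^t\|y_1-y_2\|^2$. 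The $\Uscr$ term equals $\|u_1-u_2\|^2_{\Uscr}$ minus $\int_0^t\|u_1-u_2\|^2$. The energy balance inequality \rfb{energy_balance_M} then gives exactly $\|\ULax_tz_1-\ULax_tz_2\|\leq\|z_1-z_2\|$.

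Next we apply Theorem \ref{Putin}, with $\Kscr=\Yscr\times K\times\Uscr$, to obtain the generator $\GothA^0$ and its maximal dissipative extension $\GothA$, whose domain is dense in $\Kscr$. The trajectory $t\mapsto\ULax_t z_0$ is Lipschitz, right differentiable, and satisfies $\frac{\dd^+}{\dd t}z\in\GothA z$. We then identify the structure of $\Dscr(\GothA)$ and of $\GothA$. The third component evolves by the left shift on $\Uscr$, so its right derivative exists exactly when $u\in\Hscr^1((0,\infty);U)$, and the derivative is $\dot u$. The first component is a left shift with the output $y$ appended at the origin, so its derivative exists in $\Yscr$ only if $y_0\in\Hscr^1((-\infty,0];Y)$ and the matching condition $y_0(0)=y(0^+)$ holds. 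This forces us to define
\[
[\CD]\bbm{x_0\\u(0)}:=y_0(0),\qquad [\AB]\bbm{x_0\\u(0)}:=\{\text{second components of }\GothA(y_0,x_0,u)\}.
\]
Here $\Dscr([\AB])$ is the projection of $\Dscr(\GothA)$ onto $K\times U$, with $u(0)$ read through the trace.

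The main obstacle is to show that these objects are well defined. Specifically, we must check that the state part of $\GothA(y_0,x_0,u)$ and the value $y_0(0)$ depend only on $(x_0,u(0))$. They must not depend on the whole function $u$ or on $y_0$ away from $0$. To handle this, we will compare two elements of $\Dscr(\GothA)$ that share $x_0$ and $u(0)$. We use the causality conditions \rfb{Identities} together with the time invariance built into the semigroup property. We also use dissipativity of $\GothA$, tested against vectors that differ only in the $\Uscr$ component on $[\varepsilon,\infty)$ or in $\Yscr$ away from $0$. From this we expect to get that the difference quotients $\frac{1}{h}(x_1(h)-x_2(h))$ and the output averages over $[0,h]$ agree to first order as $h\to0^+$. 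This in turn yields single-valuedness of $[\CD]$ and independence of $[\AB]$ from the choice of representative. Density of $\Dscr([\AB])$ in $K\times U$ follows from the density of $\Dscr(\GothA)$ in $\Kscr$.

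Finally, let $u\in\Hscr^1$ with $[x(0)\ u(0)]^\top\in\Dscr([\AB])$. We choose $y_0\in\Hscr^1$ with $y_0(0)=[\CD](x(0),u(0))$, so that $z_0=(y_0,x(0),u)\in\Dscr(\GothA)$. Theorem \ref{Putin} then says $\ULax_tz_0\in\Dscr(\GothA)$ for all $t$, which gives $[x(t)\ u(t)]^\top\in\Dscr([\AB])$. It also gives the right derivative inclusion, whose middle component is the first line of \rfb{Ax+Bu}. Reading off the matching condition at time $t$ gives $y(t)=[\CD](x(t),u(t))$, since the output is right continuous, as $\GothA^0 z(t)$ is right continuous.
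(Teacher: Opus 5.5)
Your route is the one the paper uses (it adapts \cite[Theorem 6.1]{ShWe:art158}): contractivity of the Lax--Phillips semigroup on $\Yscr\times K\times\Uscr$, then Theorem \ref{Putin}, then reading $[\AB]$ and $[\CD]$ off $\GothA$ and $\Dscr(\GothA)$. The gap is the step you yourself call the main obstacle. It is only announced (``we expect to get''), and the tools you name do not deliver it. Causality only relates inputs that coincide on some interval $[0,\varepsilon]$, not inputs that merely share the value $u(0)$. Dissipativity of $\GothA$ between two points $z_1,z_2$ with the same state $x_0$ has middle term $\langle b_1-b_2,x_0-x_0\rangle=0$, so by itself it can never tie the middle components $b_1,b_2$ together. (Applied to $\GothA^0z_1,\GothA^0z_2$ it does give $y_{01}(0)=y_{02}(0)$, because $\langle y_{01}'-y_{02}',y_{01}-y_{02}\rangle_{\Yscr}=\frac12\|y_{01}(0)-y_{02}(0)\|^2$ and the $\Uscr$-term is $-\frac12\|u_1(0)-u_2(0)\|^2=0$.) The estimate you need comes directly from \rfb{energy_balance_M} with $x_{01}=x_{02}=x_0$. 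If $u_1,u_2\in\Hscr^1((0,\infty);U)$ and $u_1(0)=u_2(0)$, then
\[
 \|x_1(h)-x_2(h)\|^2+\int_0^h\|y_1-y_2\|^2\dd t \m\leq\m \int_0^h\|u_1-u_2\|^2\dd t \m\leq\m \frac{h^2}{2}\,\|\dot u_1-\dot u_2\|^2_{L^2(0,h)} \m=\m o(h^2).
\]
This gives three things. First, membership of $(y_0,x_0,u)$ in $\Dscr(\GothA)$ depends only on the $\Hscr^1$-regularity of $y_0,u$ and on $(x_0,u(0),y_0(0))$; you use this tacitly in your last paragraph, where you pair an arbitrary $y_0$ with the given $u$. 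Second, the middle component of $\GothA^0(y_0,x_0,u)$ depends only on $(x_0,u(0))$. Third, so does $y_0(0)$.

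A second gap: you define $[\AB]$ through the whole set $\GothA z$, but difference quotients only see the minimal section $\GothA^0z$. Either take $[\AB](x_0,v)$ to be the union over all representatives, which suffices for the statement as posed, or prove independence using the maximality of $\GothA$, which your proposal never invokes. Your test vectors are useful here. Testing dissipativity against $z+(\phi,0,\psi)$, with $\phi,\psi\in\Hscr^1$ vanishing near $0$ (these lie in $\Dscr(\GothA)$ by causality), shows that every element of $\GothA z$ has the form $(y_0',b,u')$. Now let $z_1=(y_{01},x_0,u_1)$ and $z_2=(y_{02},x_0,u_2)$ with $u_1(0)=u_2(0)$, $y_{01}(0)=y_{02}(0)$, and $(y_{01}',b,u_1')\in\GothA z_1$. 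By the two boundary identities above, the dissipativity inequality against any point of the graph of $\GothA$ involves only $(x_0,u(0),y_0(0),b)$. So adjoining $(z_2,(y_{02}',b,u_2'))$ to the graph keeps it dissipative, and maximality forces it into $\GothA$. This is also what produces the structure \rfb{GothAact} used in Section \ref{sec6}. Finally, density of $\Dscr([\AB])$ in $K\times U$ is not immediate from density of $\Dscr(\GothA)$, since $L^2$-closeness of inputs does not control $u(0)$. One more step is needed: e.g.\ apply $\ULax_s$ to approximants $z_n=(\eta_n,x_n,u_n)\to(0,x,u)$, where $u\equiv v$ near $0$, and choose $s\in[0,t]$ with $\|u_n(s)-v\|^2\leq\frac1t\|u_n-u\|^2_{\Uscr}$.
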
}
\smallskip

In \cite{ShWe:art158}, this theorem was stated (and proved) for $K=X$,
but it remains true as stated here (i.e., a little more general), with
minor adaptations of the proof. Specifically, in Step 1 of the proof
of \cite[Theorem 6.1]{ShWe:art158}, the contraction Lax-Phillips
semigroup denoted by $\ULax^{\rm NL}$ (in \cite{ShWe:art158}) should
be replaced by $\ULax$ (given by \rfb{J_Pasha}), defined on the
product space $\Yscr\times K\times \Uscr$. Consequently, the domain
$\Dscr(\GothA)$ of the maximal dissipative operator $\GothA$ that
determines $\ULax$, will be dense in the product space $\Yscr\times
K\times\Uscr$. The remaining steps in the proof of \cite[Theorem
6.1]{ShWe:art158} are similar, only replacing $X$ with $K$.

It is easy to see that for an incrementally scattering passive system
$\Sigma$, the semigroup $\ULax$ from \rfb{J_Pasha} is a contraction
semigroup of (possibly nonlinear) operators acting on the product
space $\Yscr\times K\times \Uscr$.

If we take $u_0\in\Uscr$, $x_0\in K$ and $y_0\in\Yscr$ to represent
the future input function of $\Sigma$, its initial state and its past
output function (for negative time), respectively, then at any time
$t\geq 0$,
\begin{equation} \label{SemigroupT}
   \bbm{y_t\\ x(t)\\ u_t} \m=\m \ULax_t \bbm{y_0\\ x_0\\ u_0}. 
\end{equation}
Here $y_t$ represents the past output function up to time $t$, $x(t)$
is the state at time $t$, and $u_t$ represents the future input 
function that will reach the system after time $t$.

The contraction semigroup $\ULax$ introduced above in \rfb{J_Pasha}
is called the {\em Lax--Phillips semigroup} of $\Sigma$. We denote 
the generator of $\ULax$ by $\GothA^0$. It follows from Theorem 
\ref{Putin} that $\GothA^0$ is dissipative and its domain 
$$ \Dscr(\GothA^0) = \left\{\left.\nm\bbm{y_0\vspace{2mm}\\ x_0
   \vspace{2mm}\\ u_0}\nm\in\hspace{-4mm}\begin{array}{c}\Hscr^1
   ((-\infty,0);Y)\\ \times \\ K\\ \times\\ \Hscr^1((0,\infty);U)
   \end{array}\hspace{-2mm}\right|\hspace{-2mm}\begin{array}{c} 
   \bbm{x_0\\ u_0(0)}\in\Dscr(\AB),\\ y_0(0)=[\CD]\bbm{x_0\\ u_0(0)}
   \end{array}\hspace{-2mm}\right\}$$
is dense in the product space $\Yscr\times K\times\Uscr$. Moreover,
$\GothA^0$ has a maximally dissipative extension $\GothA$ which may
be set-valued and has the same domain, i.e., $\Dscr(\GothA)=\Dscr
(\GothA^0)$. For any $[y_t\ x(t)\ u_t]^\top\in\Dscr(\GothA^0)$, the
differential inclusion governing the evolution of $[y_t\ x(t)\ u_t]
^\top$ is \vspace{-2mm}
\begin{equation} \label{GothAact}
   \frac{\dd^+}{\dd t} \bbm{y_t\\ x(t)\\ u_t} \m\in\m \GothA 
   \bbm{y_t \\ x(t) \\ u_t} \m=\m \bbm{ \vspace{1mm} y_t' \\ 
   \vspace{1mm} [\AB] \bbm{x(t)\\ u_t(0)} \\ u_t'},
\end{equation}
where $y_t'$ and $u_t'$ denote the derivatives of $y_t$ and $u_t$.

%\begin{remark}
%In Definition \ref{Shejaiya_tragedy} and also in the definition of an
%incrementally scattering passive system, we could have omitted the
%requirement that $K$ is convex. However, if $\Sigma$ is an
%incrementally scattering passive system whose state space $K$ is a
%closed (possibly not convex) subset of $X$, and if its Lax-Phillips
%semigroup is generated (see Definition 3.2 in \cite{CranPazy}), then 
%this system can be embedded in a system with a larger, closed and 
%convex state space $\tilde{K}\subset X$. This is because the generator
%$\GothA$ of the Lax-Phillips semigroup of \m $\Sigma$ is dissipative 
%(see Remark 1.2 in \cite{CranPazy}), it determines the semigroup, and
%every dissipative operator can be extended to a maximal disspative 
%one. This extension is possible according to Zorn's lemma, and it may 
%be not unique. The closure of domain of the generator of the extension
%is convex (see Corollary 2.2 in \cite{CranPazy}), implying that the
%state space of the extended system is convex. \end{remark}

%%%%%%%%%%**********%%%%%%%%%%**********%%%%%%%%%%**********%%%%%%%%%%
\section{Projected dynamical nonlinear systems associated with an 
   incrementally scattering passive system} \label{sec6} % Section 6

Consider an incrementally scattering system $\Sigma$ on the input
space $U$, the state space $X$ and the output space $Y$ such that at
any time $t\geq 0$, the state of $\Sigma$ evolves according to the
differential inclusion in \rfb{Ax+Bu} and the output of $\Sigma$ is
described by the second row of \rfb{Ax+Bu}. Suppose that due to some
physical constraint, we must modify the system in such a way that in
the modified system $\Sigma^K$, the state is restricted to a closed
and convex subset $K$ of the state space $X$, i.e., $x(t)\in K$, for
any $t\geq 0$. For technical reasons, we assume that $K$ has a
nonempty interior\m: ${\rm int}(K)\neq\emptyset$. When $x(t)\in {\rm
int}(K)$, then the modified system should behave like the original
one. Using the projection of $\frac{\dd^+}{\dd t}x(t)$ onto the
tangent cone $T_K(x(t))$, the state of $\Sigma^K$ evolves according to
\begin{equation*} \label{Sig_k}
   \frac{\dd^+}{\dd t}x(t) \m\in\m \Pi_K\left(x(t),[\AB]\sbm{x(t)\\
   u(t)} \right),
\end{equation*}
where $[\AB]$ is as in Theorem \ref{incre_passive}, with
$\Dscr([\AB])$ dense in $X\times U$. Since $[\AB]$ is possibly
set-valued in $X$, the projection $\Pi_K$ is given by \rfb{LutonTown}
with $S=[\AB]\sbm{x(t)\\u(t)}$. The output of $\Sigma^K$ is described
by the output equation in \rfb{Ax+Bu}.

On the space $K\times U$, we denote by $[\AB]^K$ the following
operator\m: For every $t\geq 0$ and $[x(t) \ u(t)]\in \Dscr([\AB]^K)$,
\begin{equation} \label{AB^K}
   [\AB]^K\bbm{x(t)\\ u(t)} \m=\m \Pi_K\left(x(t),[\AB]\sbm{x(t)\\
   u(t)} \right)\m,
\end{equation}
with the domain $\Dscr([\AB]^K)=\Dscr([\AB])\cap (K\times U)$.  

\smallskip
{\color{blue} \begin{theorem} \label{pro_Lax} 
Let $\Sigma$ be an incrementally scattering passive system with input
space $U$, state space $\CCC$ and output space $Y$. Here $\CCC$ is a 
closed and convex subset of the real Hilbert space $X$. Let $K$ be a 
closed and convex subset of $\CCC$, such that ${\rm int}(K)$ is 
nonempty. 

Then there exists a well-posed system $\Sigma^K$ with input space $U$,
state space $K$ and output space $Y$, defined by the following
differential inclusion and output equation\m: 
\begin{equation} \label{Ax+Bu_K}
   \left\{\begin{array}{ll}\frac{\dd^+}{\dd t}x(t) \m\in\m [\AB]^K
   \bbm{x(t)\\u(t)}\m,\vspace{2mm}\\ y(t) \m=\m [\CD]\bbm{x(t)\\
   u(t)}\m, \end{array}\right. 
\end{equation} 
where $[\AB]^K$ is as in \rfb{AB^K}. If $u\in\Hscr^1((0,\infty);U)$
and the initial state $x(0)$ satisfies $[x(0)\ u(0)]^\top\in\Dscr
([\AB]^K)$, then the differential inclusion in \rfb{Ax+Bu_K} has a
unique solution $x$, meaning that the map $t\mapsto x(t)$ satisfies
$$ \hspace{2cm}\bbm{x(t)\\ u(t)} \m\in\m \Dscr([\AB]^K) 
   \FORALL t\geq 0 \m,$$
$x$ is right differentiable and Lipschitz continuous and for all
$t\geq 0$, the differential inclusion in \rfb{Ax+Bu_K} holds. For the
same $u$ and $x$, the corresponding output function $y$ of \m 
$\Sigma^K$ is given by the second part of \rfb{Ax+Bu_K}.

Moreover, $\Sigma^K$ is incrementally scattering passive, i.e., the 
solutions $x_1,x_2$ and the output functions $y_1,y_2$ corresponding 
to initial states $x_{01},x_{02}\in K$ and input functions $u_1,u_2
\in\Hscr^1((0,\infty);U)$, respectively, satisfy 
\rfb{energy_balance_M} for all $\tau\geq 0$.
\end{theorem}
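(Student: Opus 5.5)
The plan is to work at the level of the Lax--Phillips semigroup and apply Theorem \ref{th1} there. Let $Z=\Yscr\times X\times\Uscr$ and let $\ULax$ be the contractive Lax--Phillips semigroup of $\Sigma$ from \rfb{J_Pasha}, acting on $\Yscr\times\CCC\times\Uscr$. Let $\GothA$ be the maximal dissipative operator that determines $\ULax$ (Theorem \ref{Putin}). Its action is \rfb{GothAact} and its domain $\Dscr(\GothA)$ is dense in $\Yscr\times\CCC\times\Uscr$. Define
$$\Kscr=\Yscr\times K\times\Uscr\subset\Yscr\times\CCC\times\Uscr=\overline{\Dscr(\GothA)}.$$
This set is closed and convex, and ${\rm int}\,\Kscr=\Yscr\times{\rm int}(K)\times\Uscr\neq\emptyset$. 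So Theorem \ref{th1} applies. It gives that $\GothA_\Kscr=\GothA-N_\Kscr$ is maximal dissipative, and that $\GothA^0_\Kscr$ generates a contraction semigroup $\ULax^\Kscr$ on $\Kscr$ whose trajectories are the unique solutions of \rfb{Onana}.

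The second step is to compute $N_\Kscr$ and $\Pi_\Kscr$. Since $\Kscr$ is a product, directly from \rfb{N} we get $N_\Kscr(y,x,u)=\{0\}\times N_K(x)\times\{0\}$ and $T_\Kscr(y,x,u)=\Yscr\times T_K(x)\times\Uscr$. Consequently
$$\Pi_\Kscr\Bigl(\sbm{y\\x\\u},\GothA\sbm{y\\x\\u}\Bigr)=\left\{\bbm{y'\\ \Pi_K(x,q)\\ u'}\ \Big|\ q\in[\AB]\sbm{x\\u(0)}\right\}.$$
Hence \rfb{Onana} reads $\frac{\dd^+}{\dd t}y_t=y_t'$, $\frac{\dd^+}{\dd t}u_t=u_t'$ and $\frac{\dd^+}{\dd t}x(t)\in[\AB]^K\sbm{x(t)\\u_t(0)}$. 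We also have $\Dscr(\GothA_\Kscr)=\Dscr(\GothA)\cap\Kscr$, which is exactly the set of those $(y_0,x_0,u_0)$ with $x_0\in K$, $[x_0\ u_0(0)]^\top\in\Dscr([\AB]^K)$ and $y_0(0)=[\CD]\sbm{x_0\\u_0(0)}$.

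The third step is to show that $\ULax^\Kscr$ has the Lax--Phillips structure \rfb{J_Pasha}; this is the main obstacle. The approach is to argue on smooth data and extend by density and contractivity. Take $z_0=(y_0,x_0,u_0)\in\Dscr(\GothA_\Kscr)$ and let $z(t)=(y_t,x(t),u_t)$ be the solution.
\begin{itemize}
\item The $\Uscr$-component satisfies $\frac{\dd^+}{\dd t}u_t=u_t'$, a linear transport equation whose unique solution is $u_t=\SSS_t^*u_0$.
\item The $X$-component then solves the differential inclusion in \rfb{Ax+Bu_K} with input $u=u_0$, and stays in $\Dscr([\AB]^K)$.
\item The $\Yscr$-component solves a transport equation with boundary condition $y_t(0)=[\CD]\sbm{x(t)\\u(t)}$ (inherited from the domain). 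So $y_t$ is $\Sscr_{-t}y_0$ concatenated with the output $y(s)=[\CD]\sbm{x(s)\\u(s)}$, $s\in[0,t]$.
\end{itemize}
To make this rigorous, I would note that the triangular form of $\GothA$ forces the first two coordinates to decouple, and the uniqueness in Theorem \ref{th1}(iv) identifies the solution. Defining $\Sigma^{\rm st}_t,\Sigma^{\rm out}_t$ this way on the dense set and extending by continuity, we get \rfb{J_Pasha}, with causality \rfb{Identities} coming from the transport structure. A subtle point is that the $x$-trajectory must depend only on $x_0$ and $u_0$, not on $y_0$. This holds because $y_0$ does not enter the equation for $x$, and it can be checked by comparing two solutions with different $y_0$ via uniqueness.

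Finally, incremental scattering passivity follows from contractivity of $\ULax^\Kscr$. Take two initial data with $y_{01}=y_{02}=0$ (or equal $y_0$ chosen compatibly), and write $\|\ULax^\Kscr_\tau z_{01}-\ULax^\Kscr_\tau z_{02}\|^2\leq\|z_{01}-z_{02}\|^2$. The left side contains
$$\|x_1(\tau)-x_2(\tau)\|^2+\int_0^\tau\|y_1-y_2\|^2\dd t+\int_\tau^\infty\|u_1-u_2\|^2\dd t,$$
and the right side is $\|x_{01}-x_{02}\|^2+\int_0^\infty\|u_1-u_2\|^2\dd t$. Cancelling the tail integrals gives \rfb{energy_balance_M}. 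The compatibility $y_0(0)=[\CD]\sbm{x_0\\u_0(0)}$ can be met by choosing $y_0$ suitably (the same for both trajectories up to the boundary value). Alternatively, one can work with the identity for arbitrary $y_0$ and observe that $y_0$ contributes identically to both sides. Density then extends \rfb{energy_balance_M} to all initial states in $K$ and inputs in $\Uscr$.
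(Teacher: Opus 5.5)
Your proposal is correct and follows essentially the same route as the paper. You lift to the Lax--Phillips semigroup and apply Theorem~\ref{th1} on $\Yscr\times K\times\Uscr$, where the normal cone and tangent-cone projection act only on the state component, then identify the Lax--Phillips structure of the resulting contraction semigroup $\ULax^K$ and read off the energy inequality from its contractivity. The one difference is that the paper simply cites Theorem~6.3 of its earlier reference for the Lax--Phillips structure of $\ULax^K$, while you sketch that step directly from the triangular form of the generator and the uniqueness in Theorem~\ref{th1}(iv).
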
} \vspace{2mm}

{\it Proof\m:} We denote by $\widetilde{K}$ the closed and convex set
$\Yscr\times K\times \Uscr$. Clearly, $\widetilde{K}$ has nonempty
interior. Since $\Sigma$ is an incrementally scattering passive
system, it follows from what we said after Theorem \ref{incre_passive}
that $\Sigma$ can be represented entirely by its Lax-Phillips
semigroup $\left(\ULax_t\right)_{t\geq 0}$ as in \rfb{J_Pasha}.

The operator $\GothA$ (as in \rfb{GothAact}) determines the
Lax-Phillips semigroup $\left(\ULax_t\right)_{t\geq 0}$ (given in
\rfb{J_Pasha} and \rfb{SemigroupT}) associated with the incrementally
passive system $\Sigma$. Using Moreau's Theorem \ref{Mor}, we obtain
the that differential inclusion obtained using the projection of the
set $\GothA[y_t\ x(t)\ u_t]^\top$ of velocity vectors onto the 
tangent cone $T_{\widetilde{K}}([y_t\ x(t)\ u_t]^\top)$ is as 
follows\m: For all $[y_t \ x(t)\ u_t]^\top\in\Dscr(\GothA)\cap
\widetilde{K}$,
\begin{equation} \label{Rudy}
   \frac{\dd^+}{\dd t}\bbm{y_t\\ x(t)\\ u_t}\in\Pi_{\widetilde{K}}
   \left(\bbm{y_t\\ x(t)\\ u_t},\GothA\bbm{y_t\\ x(t)\\ u_t}\right)\m,
\end{equation}
and $\overline{\Dscr(\GothA)\cap\widetilde{K}}=\widetilde{K}$. Since 
the constraint is applied only on the state $x(t)$ of $\Sigma$, it 
follows from \rfb{GothAact} that 
$$ \Pi_{\widetilde{K}}\left(\bbm{y_t\\ x(t)\\ u_t},\GothA\bbm{y_t\\ 
   x(t)\\ u_t}\right) \m=\m \bbm{y'_t\\ \Pi_K\left(x(t),[\AB]
   \sbm{x(t)\\ u_t(0)}\right)\\ u_t'} \m.$$ 
Denote by $\GothA^0_K$ the following operator\m:
$$ \GothA^0_K z \m=\m \underset{w\in\Pi
   _{\widetilde{K}}(z,\GothA z)}{\rm arg\m min} \|w\|\m,$$
$$ \Dscr(\GothA_K^0) = \left\{\left.\nm\bbm{y_0\vspace{2mm}\\ x_0
   \vspace{2mm}\\ u_0}\in\hspace{-4mm}\begin{array}{c}\Hscr^1
   ((-\infty,0);Y)\\ \times \\ K\\ \times\\ \Hscr^1((0,\infty);U)
   \end{array}\hspace{-2mm}\right|\hspace{-2mm} \begin{array}{c} 
   \bbm{x_0\\ u_0(0)}\in\Dscr(\AB),\\ y_0(0)=[\CD]\bbm{x_0\\ u_0(0)}
   \end{array}\hspace{-2mm}\right\}\m.$$
From Theorem \ref{th1} we obtain that $\GothA^0_K$ is the generator of
a contraction semigroup $\left(\ULax_t^K\right)_{t\geq 0}$
on $\widetilde{K}$. It follows from Item
\rfb{item2} of Theorem \ref{th1} that the solution $[y_t \ x(t) \
u_t]^\top\in\Dscr(\GothA_K^0)$ of \rfb{Rudy} is Lipschitz continuous
and right differentiable for every $t\geq 0$. Therefore, the map $t
\mapsto x(t)\in K$ is Lipschitz continuous and right differentiable
for every $t\geq 0$. 

We claim that $\left(\ULax_t^K\right)_{t\geq 0}$ is a Lax-Phillips
semigroup, as in \rfb{J_Pasha}. The proof of this claim is as in the
second part of the proof of Theorem 6.3 in \cite{ShWeTu:art149},
on replacing the state space $X$ in \cite{ShWeTu:art149} with $K$, 
and the operator $[A-\Mscr\ B]$ in \cite{ShWeTu:art149} with 
$[\AB]^K$. The (possibly set-valued) monotone operator $\Mscr$ in 
\cite{ShWeTu:art149} corresponds here to the normal cone $N_K$, with 
the domain $K$.

If we denote the initial condition by $[y \ x(0)\ u]^\top\in 
\Dscr(\GothA_K^0)$, then it follow from the structure of $\ULax_t$ (as
in \rfb{J_Pasha}) that $u_t=\SSS^*_tu$ and $u(t)=u_t(0)$. From
\rfb{AB^K}-\rfb{Rudy}, on substituting $u(t)=u_t(0)$, we obtain from 
Item \rfb{item3} of Theorem \ref{th1} that the map $t\mapsto x(t)\in 
K$ is the unique solution of the differential inclusion in
\rfb{Ax+Bu_K}. 

From Item \rfb{item2} of Theorem \ref{th1}, we have that $[y_t\ x(t)\
u_t]^\top\in\Dscr(\GothA_K^0)$ for all $t\geq 0$, thus, it follows 
from the description of the domain $\Dscr(\GothA_K^0)$ (given above), 
that the output $y(t)$ at every $t\geq 0$ is given by \vspace{-2mm}
\begin{equation} \label{Newcastle}
   y(t) \m=\m [\CD]\bbm{x(t)\\ u_t(0)}\m.\vspace{-2mm}
\end{equation}
On substituting $u(t)=u_t(0)$ in \rfb{Newcastle}, we obtain the output
equation in \rfb{Ax+Bu_K}. Thus, we have proved the first part of
Theorem \ref{pro_Lax} (up to ``moreover'').

Now we prove the incremental passivity of $\Sigma^K$. Since
$\left(\ULax^K_t\right)_{t\geq 0}$ is a contraction semigroup, it
follows that for any $\tau\geq 0$ and initial conditions $[y_{01}\
x_{01}\ u_{01}]^\top$, $[y_{02}\ x_{02}\ u_{02}]^\top\in 
\widetilde{K}$, \vspace{-3mm}
\begin{equation} \label{inequal}
   \m\hspace{9mm} \left\|\bbm{y_{\tau 1}\\ x_1(\tau)\\ u_{\tau 1}} - 
   \bbm{y_{\tau 2}\\ x_2(\tau)\\ u_{\tau 2}}\right\| \m\leq\m \left\|
   \bbm{y_{01}\\ x_{01}\\ u_{01}}-\bbm{y_{02}\\ x_{02}\\ u_{02}}
   \right\| \m,\vspace{-1mm}
\end{equation}
where 
$$ \bbm{y_{\tau 1}\\x_1(\tau)\\u_{\tau 1}} \m=\m \ULax^K_\tau
   \bbm{y_{01}\\ x_{01}\\ u_{01}}, \quad \bbm{y_{\tau 2}\\ x_2(\tau)\\
   u_{\tau 2}} \m=\m \ULax^K_\tau\bbm{y_{02}\\ x_{02}\\ u_{02}}.$$
On squaring both sides of \rfb{inequal} and expanding the norm on 
$\Yscr\times X\times \Uscr$, we get \vspace{-1mm}
$$ \int_0^\tau \|y_{\tau 1}(t)-y_{\tau 2}(t)\|_Y^2 \dd t + \|x_1(\tau)
   - x_2(\tau)\|_X^2 + \int_\tau^\infty\n \|u_1(t)-u_2(t)\|_U^2\dd t  
   \vspace{-1mm}$$
$$ \hspace{5cm}\m\leq\m \|x_{01}-x_{02}\|_X^2 + \int_0^\infty\n 
   \|u_1(t)-u_2(t)\|_U^2\dd t\m.$$
On rearranging the terms above we obtain
$$ \hspace{-5cm}\int_0^\tau \|y_{\tau 1}(t)-y_{\tau 2}(t)\|_Y^2\dd t 
   + \|x_1(\tau) - x_2(\tau)\|_X^2\vspace{-1mm}$$
$$\hspace{5cm}\leq\m \|x_{01}-x_{02}\|_X^2 + \int_0^\tau
   \|u_1(t)-u_2(t)\|_U^2\dd t \m. \vspace{-1mm}$$
Therefore, $\Sigma^K$ is incrementally scattering passive. 
$\quad \square$  

Since $\Sigma^K$ is well-posed, it follows from \rfb{J_Pasha} that its
Lax-Phillips semigroup $\left(\ULax_t^K\right)_{t\geq 0}$ has the 
following structure:
\begin{equation*}
   \ULax_t^K \m=\m \bbm{\Sscr_{-t} & 0 \sbluff & 0 \\ 0 & I & 0 \\ 
   0 & 0 & \SSS_t^*} \left[\begin{array}{c|c c} I & \ \Sigma^{{\rm 
   out},K}_t \\ 0 & \ \Sigma^{{\rm st},K}_t \\ \hline \ \ 0 \ \ & 0\ 
   \ \ I \end{array}\right],
\end{equation*}
where $\Sigma^{{\rm st},K}_t:K\times \Uscr\rarrow K$ and $\Sigma_t
^{{\rm out},K}:K\times \Uscr\rarrow L^2([0,t];Y)$ are families of 
contraction operators. Therefore, for any initial state $x_0\in K$
and any $u\in\Uscr$, the state and the output function of \m 
$\Sigma^K$ are given by \rfb{Lea_Pais}, with $\Sigma^{{\rm st},K}
_\tau$ in place of $\Sigma^{\rm st}_\tau$ and $\Sigma^{{\rm out},K}_t$
in place of \m $\Sigma^{\rm out}_\tau$.

%%%%%%%%%%**********%%%%%%%%%%**********%%%%%%%%%%**********%%%%%%%%%%
\section{String equation with restricted displacement} \label{sec7}
% Section 7

Here we give an application of our main result (Theorem \ref{pro_Lax})
to the string equation on the interval $J=[0,\pi]$. The boundary input 
is applied at the left end ($\xi=0$), while the other end ($\xi=\pi$) 
is fixed. We consider that the vertical displacement of the string in 
two interior points is constrained, see Figure \ref{wave_fig}. This 
example is a generalization of an example in 
\cite[Sect.~4]{singh2023CDC}.

%%%%%%%%%%**********%%%%%%%%%%**********%%%%%%%%%%**********%%%%%%%%%%
\begin{figure}[h!] % Figure 2
   \centering 
   \includegraphics[height=3cm, width=9cm]{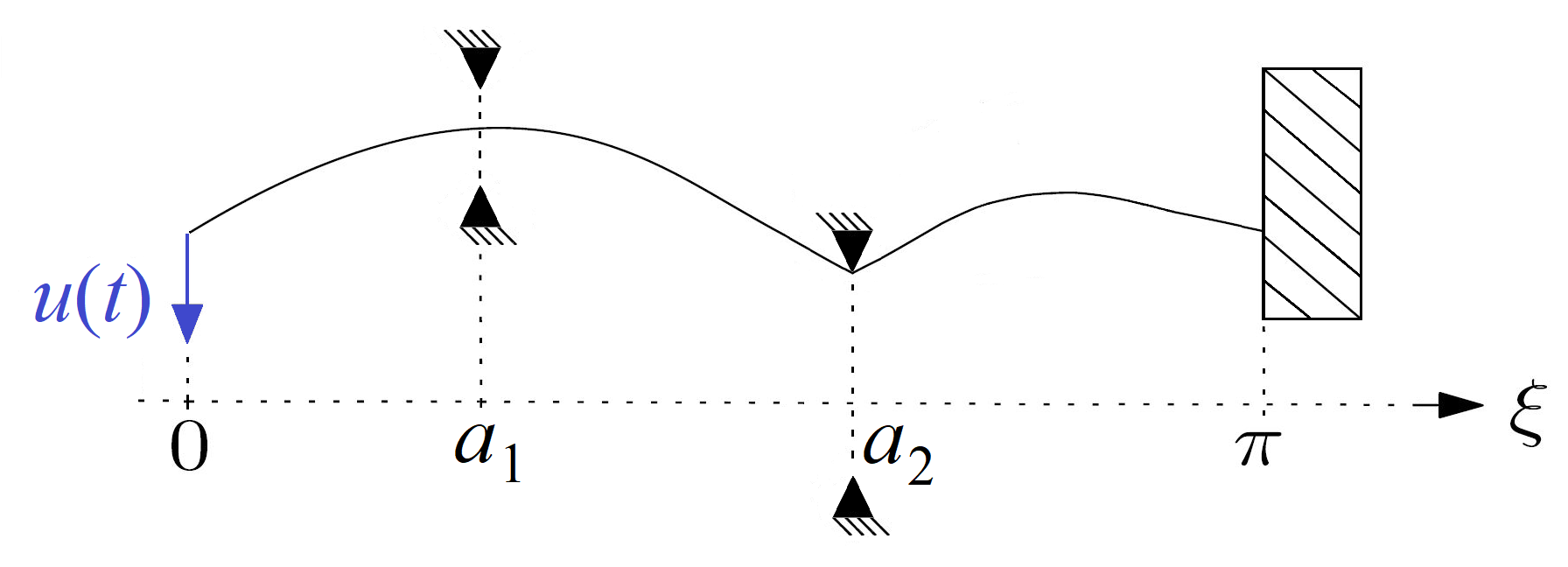}
   \caption{Vibrating string with obstacles that limit its vertical
   displacement in two points ($\xi=a_1$ and $\xi=a_2$). In the 
   figure, one constraint at $\xi=a_2$ is active.} \label{wave_fig}
\end{figure} 
%%%%%%%%%%**********%%%%%%%%%%**********%%%%%%%%%%**********%%%%%%%%%%

The partial differential and boundary equations representing the
vibrating string system $\Sigma$ are
\begin{equation}\label{wave}
   \left\{ \begin{array}{ll} \frac{\partial^2}{\partial t^2}w(\xi,t) 
   \m=\m \frac{\partial^2}{\partial \xi^2} w(x,t)\m, \quad w(\pi,t)
   \m=\m 0\m , \vspace{2mm}\\ w(\xi,0) \m=\m w_0(\xi),\ \ 
   \frac{\partial }{\partial t}w(\xi,0) \m=\m z_0(\xi)\m , 
   \vspace{2mm} \\ -\frac{\partial }{\partial \xi} w(0,t) + b^2
   \frac{\partial}{\partial t}w(0,t) \m=\m \sqrt{2}bu(t), 
   \vspace{2mm} \\ -\frac{\partial }{\partial \xi} w(0,t) - b^2
   \frac{\partial}{\partial t}w(0,t) \m=\m \sqrt{2} b y(t), 
   \end{array} \right.
\end{equation}
where $\xi\in J$ and $t\geq 0$. Here $b\neq 0$ is a constant. The 
functions $w_0$ and $z_0$ are the initial state of the system. We 
denote by $x(t)=\sbm{w(\cdot,t)\\ \frac{\partial}{\partial t}
w(\cdot,t)}$ the state of the above linear system $\Sigma$. Denote
$$\Hscr^1_r(0,\pi)=\{w\in \Hscr^1(0,\pi)\m\m|\m\m w(\pi)=0\}.$$
Then the natural state space is $X=\Hscr^1_r(0,\pi)\times L^2[0,\pi]$,
and on $X$ we define the norm as follow: For $\sbm{f\\ g}\in X$
\begin{equation} \label{normA}
   \left\| \bbm{f\\ g}\right\|^2_X \m=\m \int^{\pi}_0 \left|
   \frac{\dd f}{\dd \xi}(\xi)\right|^2 \dd \xi + \int^{\pi}_0 
   |g(\xi)|^2\dd \xi \m.
\end{equation}  
On $X$ we can define the operator $A$ as follows:
\begin{equation} \label{operatorA}
   A\bbm{f\\ g} \m=\m \bbm{0 & I\\ \frac{\dd^2}{\dd \xi^2 } & 0}
   \bbm{f\\ g}\quad \FORALL \bbm{f\\ g}\in\Dscr(A)\m,
\end{equation}
$$ \Dscr(A) \m=\m \left\{ \bbm{f \\ g}\in \hspace{-2mm}\left. 
   \begin{array}{c} \Hscr^2(0,\pi)\cap \Hscr^1_r(0,\pi)\vspace{-1mm} 
   \\ \times\vspace{-1mm} \\ \Hscr^1_r(0,\pi)\end{array} \right| 
   \frac{\dd f}{\dd \xi}(0)=b^2g(0) \right\}.$$
Clearly, $\Dscr(A)$ is dense in $X$, with the norm defined by
\rfb{normA}.  The control operator $B=[0 \ \sqrt{2}b\delta_0]^\top$,
where $\delta_0$ is Dirac delta operator at $0$, see 
\cite{singh2023CDC}.

The output equation of the vibrating string system $\Sigma$ is 
\begin{equation} \label{output_wave}
   y(t) \m=\m \bar{C}x(t)+Du(t) \m=\m -\frac{b}{\sqrt{2}}
   \frac{\partial w}{\partial t}(0,t)-\frac{1}{\sqrt{2}b}
   \frac{\partial w}{\partial \xi}(0,t),
\end{equation}
where $\bar{C}=\bbm{0 & -\sqrt{2}b\delta_0^*}$ and $D=I$. Here 
$\delta_0^*$ is the operator of point evaluation at $0$. With the 
input and the output space $U=Y=\rline$, the above system \rfb{wave}
is a linear scattering passive system, see 
\cite[Sect.~4]{singh2023CDC} (based on \cite[Sect.~7]{art54}). 

Let $a_1\in(0,\pi)$. Consider the continuous linear functional $E_1$ 
on the space $X$, defined by the point evaluation of the first 
function in $\sbm{f & g}^\top$ at the point $a_1$, that is to say:
$$E_1\bbm{f\\ g}=\m f(a_1) \m \FORALL \bbm{f\\ g}\in X.$$
By the Riesz representation theorem, there exists a unique function 
$\psi_1\in\Hscr^1_r(0,\pi)$ such that: \vspace{-3mm} 
$$ E_1 \bbm{f\\ g} \m=\m \left\langle \bbm{f\\ g}, \bbm{\psi_1\\ 0}
   \right\rangle \FORALL \bbm{f\\ g}\in X \m.$$
Denote $\bar\psi_1=\sbm{\psi_1\\ 0}$. It is easy to find that 
\vspace{-1mm}
$$ \psi_1(\xi) \m=\m \left\{\begin{array}{c c} \pi - a_1 & \mbox{for }
   \ \xi\in [0,a_1),\\ \pi-\xi  & \mbox{for }\ \xi\in [a_1,\pi] \m.
   \end{array} \right.$$
For some value $a_2\in(0,\pi)$ with $a_2\not=a_1$, we redo this 
construction with $a_2$ in place of $a_1$, obtaining the function 
$\psi_2\in\Hscr^1_r(0,\pi)$ and the vector $\bar\psi_2\in X$. It is 
easy to see that $\psi_1$ and $\psi_2$ are linearly independent.
Introduce
$$ \varphi_1 \m=\m \frac{\bar{\psi_1}}{\|\psi_1\|} \m,\qquad
   \varphi_2 \m=\m \frac{\bar{\psi_2}}{\|\psi_2\|} \m,$$
and $X_c={\rm span}\{\varphi_1,\varphi_2\}$. As in Subsection 
\ref{Marx}, we can identify $X_c$ with $\rline^2$ and then we 
have the representation $X=\rline^2\times X_c^\perp$.

The two scalar constraints that we impose on this system can be 
described using the normalized state vectors $\varphi_1,\varphi_2$ 
exactly as in \rfb{Nasrallah}, only writing $x$ in place of $z$, and 
they define a closed and convex subset $K\subset X$, which has the 
structure $K=K_c\times X_c^\perp$, where $K_c\subset\rline^2$ is 
closed and convex (a parallelogram with nonempty interior). According 
to our main result (Theorem \ref{pro_Lax}) the constrained system 
described by \rfb{Ax+Bu_K} is incrementally scattering passive 
(and it has also some other properties listed in Theorem 
\ref{pro_Lax}). The projection operator $\Pi_K$ needed to define 
$[\AB]^K$ as in \rfb{AB^K} can be expressed via a projection operator 
acting in $\rline^2$, by the technique explained in Subsection 
\ref{Marx}.

%%%%%%%%%%**********%%%%%%%%%%**********%%%%%%%%%%**********%%%%%%%%%%
\section{Projected dynamical system associated to Maxwell's 
    equations\\ on a cylinder} \label{sec8} % Section 8

As another application of our main results, we investigate the
electromagnetic field in a very much simplified representation of a
liquid metal fault current limiter (FCL). For background on FCLs we 
refer to \cite{Ganev_2012,Smedley}. We look at a relatively new type 
of FCL, called a liquid metal FCL, see \cite{Yang}. Our analysis 
refers only to the main current limiting element in a liquid metal 
FCL, which is based on the self-pinching mechanism in liquid metals
when conducting a large current, as described in \cite{He_2017}. The
pinching effect takes place in a channel represented by a bounded 
cylinder $\Om_c$ of length $L>0$ and radius $R_c$, as shown in 
Fig.~\ref{fig:domain}, filled with liquid metal. This domain is 
included in a larger cylindrical domain $\Om\subset\rline^3$, of the
same length and with the same axis. Intuitively, the region $\Om
\setminus \Om_c$ is the insulation protecting the liquid metal channel
and it inhibits the current to flow through it. We denote the radius 
of the larger cylinder $\Om$ by $R$.

%%%%%%%%%%**********%%%%%%%%%%**********%%%%%%%%%%**********%%%%%%%%%%
\begin{figure}[h!]
   \centering    
\begin{tikzpicture}[line cap=round,line join=round]
   % dimensions
   \def\a{1.5}
   \def\b{1}
   \def\h{4}
   \def\z{4}
   \def\w{2.2}
   % cylinders 
   \foreach\i in {0} % we draw the same figure at heights 0 and \h
   {%

%%%%Small cylinder
\filldraw[dashed,color=red!100, fill=red!20, very thick] (\z,\i+\b) 
   -- (0,\i+\b) arc (90:270:0.5*\b cm and \b cm) -- (\z,\i-\b) ;
\filldraw[dashed,color=red!100, fill=red!20, very thick] (0,\i-\b) 
   arc (-90:90:0.5*\b cm and \b cm);
\filldraw[color=red!100, fill=red!20, very thick]  (\z,\i) ellipse 
   (0.5*\b cm and \b cm);

%%%% Big cylinder
\draw[black] (-0*\a,\i) -- (\z,\i); % axis
\draw (\z,\i+\a) -- (0,\i+\a) arc (90:270:0.5*\a cm and \a cm) -- 
   (\z,\i-\a) ;
\draw[dashed] (0,\i-\a) arc (-90:90:0.5*\a cm and \a cm);
\draw  (\z,\i) ellipse (0.5*\a cm and \a cm);
\draw[black,-stealth] (\z,\i) -- (1.5*\z,\i) ; % axis
   \node[label={$\xi_1$}] at (1.5*4,0) {};
   \fill[black] (0,\i)  circle (1pt);
   \fill[black] (\z,\i) circle (1pt);
\draw[black,-stealth] (0,0) -- (0,3) ;
   \node[label={$\xi_2$}] at (0.4,2.4) {};
\draw[black,-stealth] (0,0) -- (-2,-2) ;
   \node[label={$\xi_3$}] at (-2,-2) {};
\draw[black](4,-0.5)--(5, -1.2);
%\draw[black] (-0.5*\z,\i) -- (-0.5*\a,\i); 
\draw (4,0.3) node { $L$};
\draw (-0.2,0.2) node { $0$};
\draw (2,0.4) node {\Large $\Omega_c$};
\draw (2.6,2) node {\Large $\Omega$};  
\draw (5.3,-1.4) node {\Large $\Gamma_0$}; }
% auxiliary lines and labels
\end{tikzpicture}  

\caption{The domain $\Om\in\rline^3$ is a cylinder and the liquid 
metal is in the smaller cylinder $\Om_c$. The two disks at the ends 
that together are the region $\Gamma_0$ of the boundary, connect to 
chambers filled with the same liquid metal, and these are connected 
to the electric terminals of this device.} \label{fig:domain}
\end{figure}
%%%%%%%%%%**********%%%%%%%%%%**********%%%%%%%%%%**********%%%%%%%%%%

It can be easily checked that the boundary $\Gamma$ of $\Om$ is
Lipschitz. We decompose the boundary as $\Gamma=\overline{\Gamma}_0
\cup\overline{\Gamma}_1$, such that $\Gamma_0\cap\Gamma_1=\emptyset$.
The boundary $\Gamma_0$ denotes the open discs of radius $R_c$ at
$\xi_1=0$ and at $\xi_1=L$ which constitute the planar part of the
boundary of $\Om_c$. $\Gamma_1$ is also open. In the region where
$\xi_1<0$ there is a chamber containing the same liquid metal, and
this chamber is in contact (over a large surface) with one terminal of
this device. Similarly, the region with $\xi_1>L$ contains another
such chamber that connects to the other terminal. We assume that the
liquid metal is a good electric conductor, so that the tangential
component of the electric field on $\Gamma_0$ is practically zero. We
assume that the input enters the system through $\Gamma_1$ and the
output is observed also on $\Gamma_1$.
 
We assume there are no external sources of electric or magnetic field,
so that the electric field intensity $\EEE$ and the magnetic field
intensity $\HHH $ evolve according to Maxwell's equations:
\begin{eqnarray} \label{Maxwelleqs}
   \left\{\begin{array}{ll}\frac{\partial \BBB}{\partial t} \m=\m 
   -\rot\EEE, \qquad \frac{\partial\DDD}{\partial t} \m=\m -\JJJ + 
   \rot\HHH, \\ \div\BBB \m=\m 0, \hspace{16mm} \div\DDD \m=\m \rho 
   \m, \end{array}\right.
\end{eqnarray}
where $\rho$ is the charge density and $\JJJ$ is the current density.
The electric flux density $\DDD$ depends linearly on $\EEE$ and the 
magnetic flux density $\BBB$ depends linearly on $\HHH$. The current
density $\JJJ$ depend linearly on $\EEE$. The relationships are as
follows\m: \vspace{-2mm}
\begin{equation} \label{Merkava}
   \DDD \m=\m \e\EEE\m, \qquad \BBB \m=\m \mu\HHH \m, \qquad 
   \JJJ \m=\m g\EEE\m,
\end{equation}
where the constant $\e>0$ is the electric permittivity and the 
constant $\mu>0$ is the magnetic permeability.
The function $g\geq 0$ is the conductivity of the material, which 
depends on time and location: Under normal operating conditions, 
we have $g(x)=g_0$ for $x\in\Om_c$, and $g(x)=0$ for 
$x\in\Om\setminus\Om_c$. Here $g_0>0$ is the conductivity of the 
liquid metal (which is 16 times less than that of copper according to
\cite{Yang}).

We denote by $\nu\in L^{\infty}(\Gamma,\rline^3)$ the outward unit
normal vector field on $\Gamma$ and $\gamma_0\in\Lscr(\Hscr^1(\Om;
\rline^3),\Hscr^{\frac{1}{2}}(\Gamma;\mathbb{R}^3))$ stands for the
Dirichlet trace operator, see for instance \cite{Lions} and 
\cite{WeSt13}. Thus, for sufficiently smooth fields $\EEE$, on the 
boundary $\Gamma$ we have that $\gamma_0\EEE=\EEE$. 

If $\EEE\in\Hscr^1(\Om;\rline^3)$, then we denote the {\it tangential
component of the Dirichlet trace of} $\EEE$ by 
\begin{equation*}
   \pi_{\tau} \EEE =( \nu \times \gamma_0 \EEE) \times \nu = \gamma_0 
   \EEE -(\gamma_0 \EEE \cdot \nu) \nu,
\end{equation*}
so that 
$$ \pi_{\tau}\in\Lscr(\mathcal{H}^1(\Om;\rline^3),L^2(\Gamma;
   \rline^3)) \mbox{  and  } \pi_{\tau} \EEE \cdot \nu =0.
  $$
Let $r$ be a positive scalar function of $\Gamma_1$ such that $r,\
r^{-1}\in L^\infty(\Gamma_1)$. Now we define the boundary control 
with input $u$ of this system as follows\m: For $t\geq 0$
\begin{equation} \label{input_BC}
   \begin{cases} \frac{1}{\sqrt{2}} \left(r \nu \times \gamma_0 
   \HHH+\pi_{\tau} \EEE \right)=u \quad {\rm on}\ \ \Gamma_1\m,\\
   \m\quad \pi_\tau\EEE=0  \quad \hspace{2.42cm} {\rm on}\ \ 
   \Gamma_0\m. \end{cases}
\end{equation}
The state space $X$, the input space $U$ and the output space $Y$ are
as follows:
\begin{eqnarray*}\label{statespace}
   \begin{array}{ll} X \m=\m\Escr\times \Escr, \quad\quad \Escr=L^2
   (\Om;\rline^3),\\ Y \m=\m U \m=\m \left\{w\in L^2(\Gamma_1;
   \rline^3)\ |\ w \cdot \nu=0\right\}, \end{array} 
\end{eqnarray*}
and the state at time $t\geq 0$ is $x(t)=[\BBB(t) \ \DDD(t)]^{\top}$.
We define the inner product on $X$ by
\begin{equation} \label{ip_X}
   \left\langle\bbm{\BBB\\ \DDD},\bbm{\tilde{\BBB}\\ \tilde{\DDD}}
   \right\rangle_X \m=\m \frac{1}{\mu}\langle \BBB, \tilde{\BBB}
   \rangle_{L^2} + \frac{1}{\e}\langle \DDD, \tilde{\DDD}
   \rangle_{L^2}\m. 
\end{equation}
On $U$, we use the inner product\vspace{-1mm}
\begin{equation} \label{ip_U}
   \langle u,\tilde{u}\rangle_U \m=\m \int_{\Gamma_1} \frac{1}{r(\xi)}
   u(\xi)\tilde{u}(\xi) \dd\xi \m.\vspace{-1mm}
\end{equation}
We denote by $\Escr_0$ the following subset of $\Escr$\m:
\vspace{-1mm}
$$ \Escr_0 \m=\m \left\{ \EEE\in L^2(\Om;\rline^3)\ |\ \rot\EEE
   \in L^2(\Omega;\rline^3),\m \pi_{\tau}\EEE \in L^2(\Gamma;
   \rline^3), \pi_{\tau}\EEE|_{\Gamma_0}=0 \right\},\vspace{-1mm}$$
endowed with the norm
$$ \|\EEE\|^2_{\Escr_0}\m=\m \|\EEE\|^2_{\Escr}+\|\rot\EEE\|^2
   _{\Escr}+\|\pi_\tau\EEE\|^2_U.$$
It follows from the above definition that $\Escr_0$ is dense in
$\Escr$ with continuous embedding, for details we refer to
\cite[Sect. 4]{WeSt13}. The operator $\pi_{\tau}$ can be extended such
that $\pi_{\tau}\in\Lscr(\Escr_0,U)$ and we denote by $\left(
\pi_{\tau} \right)^* \in \mathcal{L}(U,\Escr_0')$, where $\Escr_0'$ is
the dual of $\Escr_0$ with respect to the pivot space $\Escr$.

Maxwell's equation \rfb{Maxwelleqs} can be rewritten as follows\m: 
For $t\geq 0$,\vspace{-1mm}
\begin{equation} \label{MaxwellStateEqn_old}
   \bbm{\dot{\BBB}(t)\\ \dot{\DDD}(t)} \m=\m \bbm{0 & -L\\L^* & 
   Q-gI} P\bbm{\BBB(t)\\ \DDD(t)} \m, \vspace{-1mm}
\end{equation}
where $L=\rot$, with $\Dscr(L)=\Escr_0$. Here 
$$P \m=\m \sbm{\mu^{-1} & 0\\ 0 & \e^{-1}},\qquad Q \m=\m
    -(\pi_{\tau})^* r^{-1} \pi_{\tau}\m.$$
Thanks to the recent article \cite{Nathanael2}, we know that the
domain $\Dscr([\AB])$ is \vspace{-1mm}
\begin{equation} \label{domainA}
   \Dscr([\AB]) \m=\m \left\{ \bbm{\mu\HHH\\ \e\EEE\\ w}\n\in
   \hspace{-2mm} \left.\begin{array}{c}\Escr\vspace{-1mm} \\ \times 
   \\ \Escr\vspace{-1mm}\\\times\\ U\end{array}\right| 
   \begin{array}{c}{\rm rot}\HHH\in \Escr,\ \ \EEE\in \Escr_0,
   \vspace{1mm}\\  \frac{\sqrt{2}}{r}w = \left.\left(\nu \times 
   \gamma_0 \HHH\right)\right|_{\Gamma_1}\\ \vspace{2mm}
   \hspace{10mm}+\frac{1}{r}\left.\left(\pi_{\tau} \EEE\right)
   \right|_{\Gamma_1} \end{array}\n\right\}, \vspace{-1mm}
\end{equation} 
where we have used the relation \rfb{Merkava}. (In \cite{WeSt13} it
was only proved that the left side is a subset of the right side. We
refer to Remark 5.7 in \cite{WeSt13} and the comments around it for
more details on \rfb{domainA}). For any $[\mu\HHH \ \e\EEE \ w]^\top
\in\Dscr([\AB])$, 
\begin{equation} \label{ProperDef}
   [\AB]\bbm{\mu\HHH\\ \e\EEE\\w} =\m \bbm{-{\rm rot}\m\EEE\\
   {\rm rot}\m\HHH -g\EEE} \m.
\end{equation}

The output is described by the output equation in \rfb{Ax+Bu}, where
$[\CD]$ (a boundary trace operator) is defined on $\Dscr([\AB])$ as
follows\m: \vspace{-1mm}
\begin{equation} \label{eq:max_output}
   [\CD]\nm\bbm{\mu\HHH\\ \e\EEE\\ w} \m=\m \frac{1}{\sqrt{2}} 
   \left(r\left.\left(\nu \times \gamma_0 \HHH\right)
   \right|_{\Gamma_1}-\left.\left(\pi_{\tau} \EEE\right)
   \right|_{\Gamma_1} \right). \vspace{-1mm}
\end{equation}
Thus, the linear system \rfb{MaxwellStateEqn_old} with the boundary
condition \rfb{input_BC} is equivalent to the system described by
$\dot x=[\AB]\sbm{x\\ u}$, with $[\AB]$ as above. 

It follows from \cite[Theorems 5.1, 5.4 \& 5.6]{WeSt13} that the
Maxwell's equations system in \rfb{input_BC}, 
\rfb{MaxwellStateEqn_old} and $y=[\CD]\sbm{x\\ u}$, with $[\CD]$ as in
\rfb{eq:max_output}, is a linear scattering passive system (if we use
the inner products from \rfb{ip_X} and \rfb{ip_U}). 

The average current through this device is obtained by computing 
first the total current through a disk of radius $a$ in the cross 
section of the device at a given position $\xi_1\in(0,L)$: From the 
equation $\frac{\partial\DDD}{\partial t}=-\JJJ+\rot\HHH$ (which is
a part of \rfb{Maxwelleqs}), using Stokes' theorem, this total current
(conduction current plus displacement current) is\vspace{-2mm}
$$ I(a,\xi_1) \m=\m \int_{\Gamma_{a,\xi_1}} \HHH \cdot \dd x \m,$$
where the curve $\Gamma_{a,\xi_1}$ is a circle of radius $a$, the
boundary of the disk mentioned a little earlier. If $a\in[R_c,R]$,
then $I(a,\xi_1)$ is practically equal to the current flowing through
the device at the cross-section $\xi_1$, because all the conduction
current flows through $\Om_c$ (which has radius $R_c$), and a
negligible displacement current flows outside of $\Om_c$, so that 
$I(a,\xi_1)$ is almost a constant as long as $a\in[R_c,R]$. Similarly,
since there is no electric charge inside $\Om_c$, from div $J=\frac
{\dd\rho}{\dd t}$ (where $\rho={\rm div}\m D$ is the charge density),
the conduction current $J$ is practically independent of $\xi_1$, so 
that $I(a,\xi_1)$ is almost constant as long as $\xi_1\in[0,L]$. The
above functional $I(a,\xi_1)$ is, unfortunately, not a bounded linear
functional on $L^2(\Om;\rline^3)$. To eliminate this deficiency, we
average it over all $a\in[R_c,R]$ and over all $\xi_1\in[0,L]$ (we 
refrain from writing the integral explicitly), using that in this 
region it is anyway almost a constant. We call the resulting average 
$i(\HHH)$ the average current. The current $i(\HHH)$ is a bounded 
linear functional of the field $\HHH\in L^2(\Om;\rline^3)$.

We remark that an alternative approach to define the average current
through this device is obtained by first computing
the current through a cross-section for one specific $\xi_1\in(0,L)$, 
by integrating $\JJJ_1=g\EEE_1$, the current density in the $\xi_1$ 
direction, over that cross-section. Then we could average this current
over all possible values of $\xi_1\in[0,L]$. The resulting expression 
$i(\EEE)$ is a bounded linear functional of the field $\EEE\in L^2
(\Om;\rline^3)$, but it is also proportional to $g$ which is a big 
drawback, because this device changes its $g$ when it reaches the 
current limit, which would complicate the analysis.

If we restrict the average current $i(\HHH)$ defined ealier such that 
$|i(\HHH)|\leq i_{\rm max}$, where $i_{\rm max}>0$, then the 
corresponding set $K=K_0\times\Escr$, where \vspace{-1mm}
\begin{equation} \label{ConvexSet}
   K_0 \m= \left\{\HHH \in \Escr \m\m\big|\ |i(\HHH)| \m\leq\m 
   i_{\rm max} \right\}\m,\vspace{-1mm}
\end{equation}
is closed and convex in $X$. We are in the situation described in 
Subsection 3.3 (one dimensional constraint), now $i(\HHH)$ plays the 
role of $z_1$ from \rfb{Pibis}, and the generator of the Lax-Phillips 
semigroup of our system plays the role of $\GothA$ in \rfb{Pibis}. On 
$K$ the projected dynamical system $\Sigma^K$ associated to 
\rfb{domainA}, \rfb{ProperDef} and \rfb{eq:max_output} is given by the 
differential inclusion and the output equation in \rfb{Ax+Bu_K}. It
follows from Theorem \ref{pro_Lax} that this is an incrementally 
scattering passive system.

%%%%%%%%%%**********%%%%%%%%%%**********%%%%%%%%%%**********%%%%%%%%%%
\begin{figure}[h!] % Figure 3
   \centering 
   \includegraphics[height=45mm]{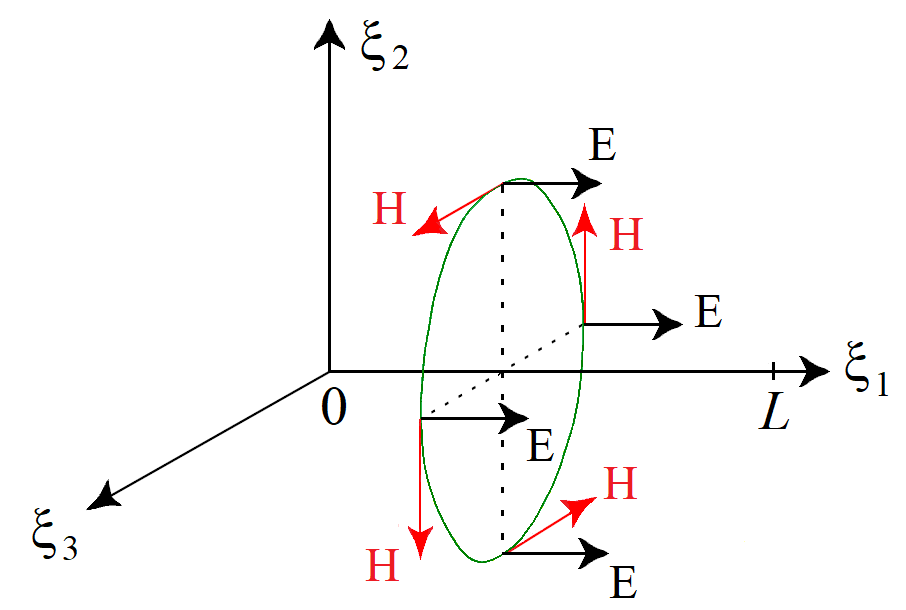}
   \caption{The fields $\EEE$ and $\HHH$ shown in four points on a 
   circle (indicated in green) that in the part $\Gamma_1$ of the 
   boundary, when the current $i$ is flowing to the right along the 
   $\xi_1$ axis.} \label{EM_fields}
\end{figure}

%%%%%%%%%%**********%%%%%%%%%%**********%%%%%%%%%%**********%%%%%%%%%%

We give some engineering interpretation of the input $u$ and the 
output $y$ of \m $\Sigma$. We have seen that $u$ satisfies
\rfb{input_BC}. According to \rfb{eq:max_output}, $y$ is given by
\vspace{-1mm}
$$y \m=\m \frac{1}{\sqrt{2}}\left(r\nu\times \gamma_0\HHH -
    \pi_\tau\EEE\right) \qquad {\rm on} \ \ \Gamma_1 \m.\vspace{-1mm}$$
If there are no significant electromagnetic fields originating from
outside this system, then the electric field along $\Gamma_1$ is
created by the voltage $V$ between the terminals of our device, which
practically equals the voltage between the two ends of the channel
$\Om_c$. Thus, $\EEE$ is uniform and pointing in the $\xi_1$
direction. Similarly, according to the second equation in
\rfb{Maxwelleqs}, and neglecting the term $\frac{\partial\DDD}
{\partial t}$, the magnetic field around the conducting cylinder
$\Om_c$ will be caused by the current $i=i(\HHH)$ and it will by 
tangent to a circle around the $\xi_1$ axis. The line integral of 
$\HHH$ along this circle must be $i$, as already discussed earlier.
Due to circular symmetry around the $\xi_1$ axis, it will be enough
to express the approximate values of $\EEE$ and $\HHH$ on the lines
in $\Gamma_1$ where $\xi_2=0$: \vspace{-2mm}
$$ \EEE \m\approx\m \bbm{V/L\\ 0\\ 0} \nm\qquad
   \HHH \m\approx\m \bbm{0\\ \frac{-i}{2\pi \xi_3}\\ 0} 
   \m,\vspace{-2mm}$$
see Fig.~\ref{EM_fields}. It is interesting to note that, in our 
above approximation of the fields, $\nu\times\HHH$ is always pointing
in the $\xi_1$ direction, just like $\EEE$. Thus, both $u$ and $y$ 
are (at every point in $\Gamma_1$) vectors pointing in the $\xi_1$ 
direction, so that their second and third components are zero. The 
approximate values of the first components on the round part of 
$\Gamma_1$ are\m:\vspace{-1mm}
$$ u_1 \m=\m \frac{1}{\sqrt{2}}\left( \frac{ri}{2\pi R} +
   \frac{V}{L} \right) \m,\qquad y_1 \m=\m \frac{1}{\sqrt{2}}\left( 
   \frac{ri}{2\pi R} - \frac{V}{L} \right) \m.$$
These variables are linear combinations of $V$ and $i$ that only 
differ in the sign of the $V$ term. Such variables are called 
scattering variables in electric circuit theory.

What happens at the physical level when $|i(\HHH)|$ reaches the limit 
$i_{\rm max}$? A cavity (without liquid metal) is formed in the 
channel $\Om_c$ and the electric current passes this cavity by forming
an arc. This arc has high resistance and high inductance. Thus, the 
voltage on the FCL increases, and the device works approximately as a 
current source. After the current returns below the saturation limit, 
the device returns to normal operation (instead of burning out like a 
fuse). 

%%%%%%%%%%**********%%%%%%%%%%**********%%%%%%%%%%**********%%%%%%%%%%
%\bibliographystyle{plain}        % Include this if you use bibtex 
%\bibliography{bibfile}  

\end{document}